\newtheorem{theorem}{Theorem}
\newtheorem{lemma}[theorem]{Lemma}
\newtheorem{corollary}[theorem]{Corollary}
\newtheorem{proposition}[theorem]{Proposition}
\newtheorem{definition}[theorem]{Definition}
\newtheorem{example}[theorem]{Example}
\newtheorem{Remark}[theorem]{Remark}
\numberwithin{theorem}{section}
\numberwithin{equation}{section}
\newcommand{\B}{\mathbb{B}}
\newcommand{\R}{\mathbb{R}}
\newcommand{\C}{\mathbb{C}}
\newcommand{\Cc}{\mathcal{C}}
\newcommand{\al}{\alpha}
\newcommand{\ep}{\epsilon}
\newcommand{\Om}{\Omega}
\newcommand{\Omf}{\partial\Omega}
\newcommand{\Omb}{\bar{\Omega}}
\newcommand{\fami}{\mathcal{V}(\Om,\fii,f)}
\newcommand{\fii}{\varphi}
\newcommand{\D}{\Delta_H }
\newcommand{\U}{\mathtt{U}}
\begin{document}
\title[  H\"older Regularity for solutions to CMAE]{ H\"older regularity for solutions to \\  complex  Monge-Amp\`ere equations}

\keywords{Complex Monge-Amp\`ere equation, plurisubharmonic function, Dirichlet problem, H\"older  continuity,  strongly hyperconvex Lipschitz domain}
\author{Mohamad CHARABATI}

\date{\today}

\begin{abstract}
We consider the Dirichlet problem for the complex Monge-Amp\`ere equation in a bounded strongly hyperconvex Lipschitz domain in $\C^n$. We first give a sharp estimate on the modulus of continuity of the solution when the boundary data is continuous and the right hand side has a continuous density. Then we consider the case when the boundary value function is $C^{1,1}$ and the right hand side has a density in $L^p(\Omega)$ for some $p>1$ and prove the H\"older continuity of the solution.
\end{abstract}

\maketitle

\section{Introduction}

Let $\Om$ be a bounded  pseudoconvex   domain in $\C^n$. Given $\fii\in\Cc(\Omf) $ and $ 0 \leq  f\in L^1 (\Omega)$.
We consider the Dirichlet problem:
\begin{center}
$ Dir(\Om,\fii,f):$
$ \left\{\begin{array}{ll}
            u\in PSH(\Om)\cap\Cc(\Omb)& \\
           (dd^c u)^n=f \beta^n  & \text{  in  } \; \Om \\
            u=\fii  & \text{ on  }  \Omf \\
         \end{array} \right. $

\end{center}
where $ PSH(\Om)$ is the set of plurisubharmonic (psh) functions in $\Om$. Here we denote $ d= \partial + \bar{\partial}$ and $ d^c = i/4(\bar{\partial} - \partial)$ then $ dd^c=i/2 \partial \bar{\partial} $ and $ (dd^c.)^n $ stands for the complex  Monge-Amp\`ere operator.

If $ u \in \Cc^2(\Om) $ and is plurisubharmonic function, the complex Monge-Amp\`ere operator is given by 
 $$ (dd^c u)^n= det \left(\frac{\partial^2 u}{\partial z_j \partial \bar{z_k}} \right) \beta^n $$
 where $ \beta = i/2 \sum_{j=1}^{n} dz_j \wedge d\bar{z_j}$ be the standard K\"ahler form in $\C^n$.

In their seminal work, Bedford and Taylor proved that the complex Monge-Amp\`ere operator can be extended to the set of bounded  plurisubharmonic functions (see \cite{BT76}, \cite{BT82}).  Moreover, it is invariant under holomorphic change of coordinates.
We refer the reader to \cite{BT76}, \cite{De89}, \cite{Kl91}, \cite{Ko05} for more details on its properties.

\smallskip
This problem has been studied extensively in last decades by many authors. When $\Om$ is a bounded strongly pseudoconvex domain with smooth boundary, Bedford and Taylor had showed that $Dir(\Om, \fii, f)$ has a unique continuous solution $\U:=\U(\Om,\fii, f)$. Furthermore, it was proved in \cite{BT76} that $ \U \in Lip_\alpha (\Omb) $ when $\fii \in Lip_{2\al} (\Omf) $ and $ f^{1/n} \in Lip_\al (\Omb) $ ($ 0 < \al \leq 1$). In the non degenerate case i.e. $ 0 < f\in \Cc^\infty (\Omb)$ and $ \fii \in \Cc^\infty (\Omf)$,  Caffarelli, Kohn, Nirenberg and Spruck proved in \cite{CKNS85} that $ \U \in \Cc^\infty (\Omb) $. However a simple example of  Gamelin and Sibony shows that the solution is not, in general, better than $\Cc^{1,1} $-smooth when $ f \geq 0 $ and smooth (\cite{GS80}). Krylov proved  that if $\fii \in \Cc^{3,1} (\Omf) $ and $ f^{1/n} \in \Cc^{1,1} (\Omb) $, $f \geq 0$ then $\U \in \Cc^{1,1}(\Omb)$ (see \cite{Kr89}). 

For $B$-regular domains, Blocki \cite{Bl96} proved the existence of a continuous solution to the Dirichlet problem $Dir(\Om,\fii,f)$ when $f \in \Cc(\Omb)$.

For a strongly pseudoconvex domain with smooth boundary, Ko\l odziej demonstrated in \cite{Ko98} that $ Dir(\Om, \fii , f) $ still admit a unique continuous solution under the  milder assumption $f \in L^p (\Om) $, for $p>1$. Recently Guedj, Kolodziej and Zeriahi studied the H\"older continuity of the solution when $ 0 \leq f\in L^p (\Om) $, for some $ p>1$, is bounded near the boundary (see \cite{GKZ08}).

For the complex Monge-Amp\`ere equation on a compact K\"ahler manifold, H\"older continuity of the solution was proved earlier by 
Ko\l odziej \cite{Ko08} (see also \cite{DDGHKZ12}).

A viscosity approach to the complex Monge-Amp\`ere equation has been developed in \cite{EGZ11} and \cite{Wan12}.

\smallskip
In this paper, we consider the more general case where $\Om$ be a bounded  strongly hyperconvex  Lipschitz domain (the boundary does not need to be smooth) and $ f \in L^p (\Omega)$.

We will generalize the approach of Bedford and Taylor \cite{BT76} by showing an estimate for the modulus of continuity to the solution in terms of the modulus of continuity of the data.
\smallskip

\noindent {\bf Theorem A.}
{\it
Let $ \Om \subset \C^n $ be a bounded strongly hyperconvex Lipschitz domain, $ \fii \in \Cc(\Omf)$ and $ 0 \leq f \in \Cc(\Omb)$. Assume  that $ \omega_\fii  $ is the modulus of continuity of $\fii$ and $ \omega_{f^{1/n}} $ is the modulus of continuity of $ f^{1/n}$. Then the modulus of continuity of $\U$  has the following estimate
   $$ \omega_\U (t) \leq \eta  (1+\| f\|^{1/n}_{L^{\infty}(\Omb)} )  \max \{\omega_\fii(t^{1/2}), \omega_{f^{1/n}}(t), t^{1/2} \}  $$
where  $ \eta $ is a positive constant depending on $ \Om$.
}

Here we will use a new description of the solution given by Proposition \ref{solution} to get an optimal control for the modulus of continuity of this solution in a strongly hyperconvex Lipschitz domain. 

\medskip

For more general density $ f \in L^p(\Om)$ for some $ p>1$, it was shown in \cite{GKZ08} that the unique solution to $Dir(\Om, \fii,f)$ belongs to $ \Cc^{0,\al} (\Omb)$ for all $ \alpha < 2/(nq+1)$ when $ \fii \in \Cc^{1,1}(\Omf)$ and $ f \in L^p(\Om)$ be a bounded function near the boundary. Here we will improve this result and show the following theorem  
\smallskip

\noindent {\bf Theorem B.}
\textit{
Let $ \Om \Subset \C^n$ be a bounded strongly hyperconvex Lipschitz domain. Assume that $ \fii \in \Cc^{1,1}(\Omf)$ and $ f \in L^p(\Om)$ for some $p >1$. Then the unique solution $\U$ to $ Dir(\Om,\fii,f)$ is $ \alpha$-H\"older continuous on $\Omb$ for any $ 0< \alpha < 1/(nq+1)$ where $ 1/p+1/q=1$. Moreover, if  $p \geq 2$, then the solution $\U$ is $ \alpha$-H\"older continuous on $\Omb$ for any $ 0< \alpha < min \{ 1/2, 2/(nq+1) \}$.
}

\bigskip

\noindent {\bf Acknowledgements}.
I would like to express my deepest and sincere gratitude to my advisor, Professor Ahmed Zeriahi, for all his help and sacrificing his very valuable time for me.  I also would like to thank Hoang Chinh Lu for valuable discussions.  I wish to express my  acknowledgement to Professor Vincent
Guedj for  useful  discussions.

\bigskip

\section{Preliminaries} 

We recall that a hyperconvex domain is to be a domain in $\C^n$ admitting a bounded exhaustion function.

Let us define the class of hyperconvex domains which will be considered in this paper.

\begin{definition}\label{lip}
A bounded domain $ \Om \subset \C^n$ is called strongly hyperconvex Lipschitz (shortly SHL) domain if there exists a neighbourhood $\Om'$ of $\Omb$ and a  Lipschitz plurisubharmonic function $ \rho : \bar{\Om}' \to \R $ such that  
\begin{enumerate}
\item $ \rho < 0$ in $\Om$ and $\partial \Omega = \{  \rho = 0\}$,
\item there exists a constant $c>0$ such that $ dd^c \rho \geq c \beta $ in  $\Omega$ in the weak sense of currents.
\end{enumerate}
\end{definition}

\begin{example}\label{hyperconvex}
\
\begin{enumerate}
\item Let $\Om$ be a strictly convex domain that is there exists a Lipschitz defining function $ \rho $ such that $ \rho - c |z|^2$ is convex  for some $c>0$. It is clear that $\Om$ is strongly hyperconvex  Lipschitz domain.
\item A smooth strictly pseudoconvex bounded domain is a SHL  domain (see \cite{HL84}).
\item The nonempty finite intersection of  strictly pseudoconvex bounded domains with smooth boundary in $ \C^n$  is a bounded SHL  domain. In fact, it is sufficient to put $ \rho = max \{\rho_i \} $.
More generally a finite intersection of SHL domains is an SHL domain.
\item The domain $ \Om = \{ z = (z_1,\cdots,z_n) \in \C^n ; |z_1| + \cdots + |z_n| < 1 \}$ ($n \geq 2$) is a bounded strongly hyperconvex Lipschitz domain in $\C^n$ with non smooth boundary.
\item  The unit polydisc in $\C^n$ ($n \geq 2)$ is hyperconvex with Lipschitz boundary but it is not a strongly hyperconvex Lipschitz.
\end{enumerate}

\end{example}

\begin{Remark}
Kerzman and Rosay \cite{KR81} proved that in a hyperconvex domain there exists there exists an exhaustion function which is smooth and strictly plurisubharmonic. Furthermore, they proved that any bounded pseudoconvex domain with $\Cc^1$-boundary is hyperconvex domain. This result was extended by Demailly  \cite{De87} to bounded pseudoconvex domains with Lipschitz boundary. 
\end{Remark}
 
Let $\Om \subset \C^n$ be a bounded  domain. If $u\in PSH(\Om) $ then $dd^c u \geq 0$ in the sense of currents.  We define 
\begin{equation}\label{laplacian}
\D u:=  \sum\limits_{j,k=1}^{n} h_{j\bar{k}} \frac{\partial ^2u}{\partial z_k \partial{\bar{z_j}}}
\end{equation}
for every positive definite Hermitian matrix $ H=(h_{j\bar{k}})$. We can see $ \D u$ as a positive Radon measure in $\Om$.

The following lemma is elementary  and important for the sequel (see \cite{Gav77}). 

\begin{lemma}(\cite{Gav77}).\label{det}
Let  $ Q $  be a  $ n \times n $ nonnegative hermitian matrix. Then
\begin{center}
$ \left(det Q\right)^\frac{1}{n}=\inf \{tr(H.Q); H\in H_n^+ \; and \; det(H)=n^{-n} \} $
\end{center}
where $  H_n^+ $ denotes the set of all positive hermitian $ n \times n $ matrices.
\end{lemma}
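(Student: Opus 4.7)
The plan is to reduce the identity to an AM–GM inequality on the eigenvalues of a positive hermitian matrix, and then exhibit the minimizer explicitly.

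First I would prove the easy direction $(\det Q)^{1/n}\le \mathrm{tr}(HQ)$ for every admissible $H$. The key observation is that although $HQ$ is not hermitian in general, it is similar to $H^{1/2}QH^{1/2}$, which is nonnegative hermitian; hence its eigenvalues $\lambda_1,\dots,\lambda_n$ are real and nonnegative, $\mathrm{tr}(HQ)=\sum_j\lambda_j$, and $\det(HQ)=\prod_j\lambda_j$. The classical AM–GM inequality then gives
\begin{equation*}
\mathrm{tr}(HQ)\;\ge\; n\bigl(\det(HQ)\bigr)^{1/n}\;=\;n\,(\det H)^{1/n}(\det Q)^{1/n}\;=\;(\det Q)^{1/n},
\end{equation*}
using the normalization $\det H=n^{-n}$.

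Next I would show that this lower bound is attained. Assume first that $Q$ is positive definite. Set
\begin{equation*}
H\;:=\;\frac{(\det Q)^{1/n}}{n}\,Q^{-1}.
\end{equation*}
This $H$ is positive hermitian, and a direct computation gives $\det H = n^{-n}$ (using $\det Q^{-1}=(\det Q)^{-1}$) and $\mathrm{tr}(HQ)=\tfrac{(\det Q)^{1/n}}{n}\,\mathrm{tr}(I)=(\det Q)^{1/n}$. So the infimum is achieved and equals $(\det Q)^{1/n}$.

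Finally, for a merely nonnegative hermitian $Q$ I would pass to the limit. Replace $Q$ by $Q_\varepsilon:=Q+\varepsilon I$, which is positive definite, and take $H_\varepsilon:=\tfrac{(\det Q_\varepsilon)^{1/n}}{n}Q_\varepsilon^{-1}$ as above. Then $\det H_\varepsilon=n^{-n}$ and
\begin{equation*}
\mathrm{tr}(H_\varepsilon Q)\;\le\;\mathrm{tr}(H_\varepsilon Q_\varepsilon)\;=\;(\det Q_\varepsilon)^{1/n}\;\longrightarrow\;(\det Q)^{1/n}
\end{equation*}
as $\varepsilon\to 0^+$, so combined with the lower bound from the first step the infimum equals $(\det Q)^{1/n}$ (though it may fail to be attained when $\det Q=0$). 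The only mildly delicate point is the degenerate case, which is why the statement is phrased as an infimum rather than a minimum; the positive definite case is really the heart of the argument and reduces to picking $H$ proportional to $Q^{-1}$ so that $HQ$ becomes a scalar multiple of the identity, the unique equality case in AM–GM.
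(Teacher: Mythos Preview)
Your proof is correct. The paper does not actually prove this lemma; it merely states it and attributes it to \cite{Gav77}, so there is no proof in the paper to compare against. Your argument is the standard one: the lower bound $(\det Q)^{1/n}\le \mathrm{tr}(HQ)$ follows from the AM--GM inequality applied to the eigenvalues of the nonnegative hermitian matrix $H^{1/2}QH^{1/2}$ (which is similar to $HQ$), the infimum is attained at $H=\tfrac{(\det Q)^{1/n}}{n}Q^{-1}$ when $Q$ is positive definite, and the degenerate case $\det Q=0$ is handled by the approximation $Q_\varepsilon=Q+\varepsilon I$. All steps are justified, including the observation that the infimum need not be a minimum when $Q$ is singular.
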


\begin{example}
We calculate $ \D (|z|^2)$ for every matrix $ H\in H_n^+$ and $ det H = n^{-n} $.
$$ \D ( | z |^2) =  \sum\limits_{j,k=1}^{n} h_{j\bar{k}} . \delta_{k\bar{j}}= tr(H) $$
using the inequality of arithmetic and geometric means, we have :
$$ 1 = (det I)^{\frac{1}{n}} \leq tr(H) ,$$
hence $ \; \;  \D (| z |^2) \geq 1 $
for every matrix $ H \in H_n^+ $ and $ det(H) = n^{-n} $. 
\end{example}
 Using ideas from the theory of viscosity due to Eyssidieux, Guedj and Zeriahi \cite{EGZ11},  we can prove the following result.
\begin{proposition} \label{equi}
Let $ u \in PSH\cap L^{\infty} (\Om)$ and $ 0 \leq f \in \Cc(\Omb)$. Then the following conditions are equivalent:\\
1) $ \D u \geq f^{1/n} $ in the weak sense of distributions, for any $ H\in H_n^+ $ and $ det H=n^{-n} $. \\
2) $(dd^c u)^n \geq f \beta^n $  in the weak sense of currents in $\Om $.
\end{proposition}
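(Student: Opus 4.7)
The plan is to combine Lemma \ref{det} (Gaveau's formula) with the viscosity characterisation of pluripotential subsolutions from \cite{EGZ11}. For $u$ of class $\Cc^2$, Lemma \ref{det} applied pointwise to the nonnegative Hermitian matrix $Q(z)=(u_{j\bar k}(z))$ gives
\[
\bigl(\det(u_{j\bar k}(z))\bigr)^{1/n} \;=\; \inf_{H}\, \operatorname{tr}\bigl(H\cdot Q(z)\bigr) \;=\; \inf_{H}\, \D u(z),
\]
where the infimum is over $H\in H_n^+$ with $\det H=n^{-n}$. This is precisely the equivalence (1)$\,\Leftrightarrow\,$(2) in the smooth case, so the remaining task is to promote this pointwise identity to an arbitrary bounded plurisubharmonic $u$.

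To do so, I reformulate both conditions in the viscosity sense. By the Eyssidieux--Guedj--Zeriahi theorem \cite{EGZ11}, a bounded plurisubharmonic $u$ satisfies $(dd^c u)^n \geq f\,\beta^n$ in the pluripotential sense if and only if it is a viscosity subsolution of $(dd^c u)^n=f\,\beta^n$; that is, whenever $q\in\Cc^2$ satisfies $u\leq q$ in a neighbourhood of a point $z_0$ with equality at $z_0$, the complex Hessian $(q_{j\bar k}(z_0))$ is nonnegative and $\det(q_{j\bar k}(z_0))\geq f(z_0)$. On the other hand, for each fixed $H$ the operator $\D$ is linear with constant coefficients, so a standard mollification argument shows that $\D u\geq f^{1/n}$ holds in the distributional sense if and only if for every test function $q$ as above one has $\D q(z_0)\geq f^{1/n}(z_0)$. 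Indeed, $u_\varepsilon := u\ast\chi_\varepsilon$ is smooth plurisubharmonic, and because $\D$ commutes with convolution, $\D u_\varepsilon=(\D u)\ast\chi_\varepsilon$, which lets one pass between the two formulations by letting $\varepsilon\to 0$ and using the continuity of $f^{1/n}$ inherited from $f\in\Cc(\Omb)$.

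With both statements in viscosity form, the equivalence (1)$\,\Leftrightarrow\,$(2) at each contact point $z_0$ reduces to the identity
\[
\bigl(\det(q_{j\bar k}(z_0))\bigr)^{1/n} \;=\; \inf_{H}\, \D q(z_0),
\]
which is Gaveau's lemma applied to the nonnegative Hermitian matrix $(q_{j\bar k}(z_0))$. Concretely, (2)$\Rightarrow$(1) follows from the ``$\leq$'' direction in Gaveau's formula applied to touching quadratics, while (1)$\Rightarrow$(2) is the corresponding ``$\geq$'' direction, using that the infimum is attained and every admissible $H$ tests condition (1). The main technical obstacle I anticipate is the rigorous passage between the distributional and the viscosity form of $\D u\geq f^{1/n}$ for $u$ that is only bounded and upper semicontinuous: since the standard viscosity--distribution equivalence for linear elliptic operators is usually stated for continuous subsolutions, one must exploit the constancy of the coefficients of $\D$ together with the fact that $\D u$ is already a well-defined nonnegative Radon measure for plurisubharmonic $u$, and combine this with the mollification argument sketched above.
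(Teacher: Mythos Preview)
Your overall strategy coincides with the paper's: Gaveau's lemma settles the smooth case, and the viscosity characterisation from \cite{EGZ11} is the bridge to general bounded psh $u$. The paper, however, organises the two implications asymmetrically, and this matters precisely at the point you flag as the ``main technical obstacle''.

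For $(1)\Rightarrow(2)$ the paper never passes through a viscosity formulation. It simply mollifies $u$, uses $\Delta_H u_\varepsilon=(\Delta_H u)\ast\chi_\varepsilon\geq (f^{1/n})_\varepsilon$ pointwise, applies Gaveau to the smooth $u_\varepsilon$ to obtain $(dd^c u_\varepsilon)^n\geq ((f^{1/n})_\varepsilon)^n\beta^n$, and lets $\varepsilon\to 0$ via Bedford--Taylor convergence. Your route through viscosity here would work but is more roundabout.

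For $(2)\Rightarrow(1)$ both you and the paper arrive, via \cite{EGZ11} and Gaveau, at the viscosity inequality $\Delta_H q(z_0)\geq f^{1/n}(z_0)$ for every $\Cc^2$ upper test $q$ at every $z_0$. The remaining step---upgrading this to $\Delta_H u\geq f^{1/n}$ in the sense of distributions---is exactly the obstacle you anticipate, and the mollification argument you sketch does \emph{not} close it. The identity $\Delta_H u_\varepsilon=(\Delta_H u)\ast\chi_\varepsilon$ presupposes control of the distribution $\Delta_H u$, which is the very object you are trying to bound from below; the viscosity condition on $u$ is a pointwise condition at touching points and is not transferred to $u_\varepsilon$ by convolution in any useful way. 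The paper's device is different and concrete: first reduce to $f>0$ smooth, solve $\Delta_H g=f^{1/n}$ with $g\in\Cc^\infty$, observe that every upper test for $u-g$ then has nonnegative $\Delta_H$ at the contact point, and invoke H\"ormander's characterisation of subharmonic functions (Proposition~3.2.10$'$ in \cite{H94}) to conclude that $u-g$ is $\Delta_H$-subharmonic, hence $\Delta_H u\geq \Delta_H g=f^{1/n}$ distributionally. General continuous $f\geq 0$ is handled by approximating $f$ from below by smooth positive $w$ and by perturbing $u$ to $u+\varepsilon|z|^2$. This Poisson-equation trick (reducing to the homogeneous case where viscosity and distributional subharmonicity are classically equivalent) is the missing ingredient in your outline.
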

This result is implicitly contained in \cite{EGZ11}, but we will give a complete proof for the convenience of the reader.
\begin{proof} 
First, suppose that $u \in \Cc^2(\Om)$,then by  Lemma  \ref{det} the following 
$$ \D u =  \sum\limits_{j,k=1}^{n} h^{j\bar{k}} \frac{\partial ^2u}{\partial z_j \partial{\bar{z_k}}}   \geq f^{1/n} , \forall H \in H_n^+, det(H) = n^{-n} $$
is equivalent to 
$$   \left(det(\frac{\partial ^2u}{\partial z_j \partial{\bar{z_k}}}) \right)^{1/n} \geq f^{1/n}. $$
The last inequality means that
$$ (dd^c u)^n \geq f \beta^n .$$

(1  $ \Rightarrow $ 2)
Let $(\rho_\epsilon)$ be a family of regularizing kernels with supp $ \rho_\epsilon \subset B(0,\epsilon) $ and $ \int_{B(0,\epsilon)} \rho_\epsilon = 1 $, hence the sequence $ u_\epsilon = u* \rho_\epsilon $ decreases to $u$, then we see that (1) implies  $ \D u_\ep \geq (f^{1/n} )_\ep $. Since $ u_\ep$ is smooth, we use the first case and get $ (dd^c u_\ep )^n \geq ( (f^{1/n})_\ep )^n \beta^n $, hence by applying the convergence theorem of Bedford and Taylor (Theorem 7.4 in \cite{BT82}) we obtain $ (dd^c u)^n \geq f \beta^n $.

\medskip

(2 $ \Rightarrow $ 1)
Fix $ x_0 \in \Om $, and $ q $ is $ \Cc^2 $-function in a neighborhood $B$ of $ x_0$ such that  $ u \leq q $  in this neighborhood and $ u(x_0) = q(x_0) $. 
 \\
First step:  we will show that $ dd^c q_{x_0} \geq 0 $. Indeed, for every small enough ball $ B^\prime \subset B $  centered at $ x_0$, we have

$$ u(x_0) - q(x_0) \geq \frac{1}{V(B^\prime)} \int\limits_{B^\prime} ( u - q) dV ,$$ 
then we get
$$ \frac{1}{V(B^\prime)} \int\limits_{B^\prime} qdV -q(x_0) \geq \frac{1}{V(B^\prime)} \int\limits_{B^\prime} u dV - u(x_0) \geq 0 . $$
Since $ q $ is $\Cc^2$-smooth and  the radius of $ B^{\prime} $ tend to 0, it follows, form Proposition 3.2.10 in \cite{H94}, that $ \Delta q_{x_0} \geq 0$. For every positive definite Hermitian matrix $H$ with $ det H = n^{-n}$, we make linear change of complex  coordinates $T$ such that $H$ will be $I$ (the identity matrix) in the new coordinates and  $ \tilde{Q}= (\partial^2 \tilde{q} / \partial w_j \partial \bar{w_k})$ where $\tilde{q}=q\circ T^{-1}$  then
$$ \D q(x_0) =  tr(H.Q)=   tr(I.\tilde{Q}) =  \Delta \tilde{q}(y_0)   $$
Hence $ \D q(x_0) \geq 0$ for every $ H \in H_n^+ $ then $ dd^c q_{x_0} \geq 0$. 

\smallskip
\noindent
Second step: we claim that $ (dd^c q)_{x_0}^n \geq f(x_0) \beta^n $. Suppose that there exists a point 
$ x_0 \in \Omega $ and a $\Cc^2 $-function $q$ which satisfies $ u \leq q $ in a neighborhood of $x_0$ and 
$ u(x_0)=q(x_0) $ such that  $ (dd^c q)_{x_0}^n < f(x_0) \beta^n $. we put
$$ q^\ep (x) = q(x) + \ep \left(\| x-x_0 \|^2  - \frac{r^2}{2}\right) $$
 for  $ 0< \ep \ll 1 $ small enough, we see that 
 $$ 0< (dd^c q^\ep )_{x_0}^n < f(x_0) \beta^n . $$
Since $f$ is lower semi-continuous on $\Omb$, there exists  $ r>0 $ such that 
\begin{center}
  $ (dd^c q^\ep )_x^n \leq f(x) \beta^n $  ;  $ x \in B(x_0,r) $.
\end{center}

\noindent
Then $ (dd^c q^\ep )^n \leq f \beta^n \leq (dd^c u)^n $ in  $ B(x_0, r)$ and  
$ q^\ep = q + \ep \frac{r^2}{2}  \geq q \geq u $ on  $ \partial B (x_0 , r) $ , hence $ q^\ep \geq u $ on $ B (x_0,r) $ by the comparison  principle. But $ q^\ep (x_0) = q(x_0) - \ep \frac{r^2}{2} = u(x_0) -\ep \frac{r^2}{2} < u(x_0) $ contradiction.

Hence, form the first part of the proof, we get $  \D q (x_0)  \geq f^{1/n}(x_0) $ for every point  $ x_0 \in \Om $ and every $ \Cc^2 $-function  $ q $ in a neighborhood of $ x_0$ such that $ u \leq q $ in this neighborhood  and $ u(x_0) = q(x_0) $.

Assume that $ f >0 $ and  $ f \in \Cc^{\infty} (\Omb) $, then there exists  $ g \in \Cc^\infty(\Omb) $ such that  $ \D g = f^{1/n} $. Hence $ \fii = u - g$ is  $\D$-subharmonic (by Proposition 3.2.10', \cite{H94}), from which it follows $ \D \fii \geq 0$ and $ \D u \geq f^{1/n} $.

In  case $f > 0$ is merely continuous, we observe that
$$ f= sup\{ w; w \in \Cc^\infty , f\geq w > 0\} ,$$
then $ (dd^c u )^n \geq f \beta^n \geq w \beta^n $. Since  $w>0$ is smooth, we have $ \D u \geq w^{1/n} $. Therefore, we get   $ \D u \geq f^{1/n} $. 

In the general case $ 0 \leq f \in \Cc(\Omb) $, we observe that $ u^\ep (z) = u(z) + \ep |z|^2 $ satisfies 
$$ (dd^c u^\ep )^n \geq (f+ \ep^n)\beta^n, $$
then 
$$ \D u^\ep \geq (f+\ep^n )^{1/n} .$$
Letting $ \ep $ converges to $0$, we get 
$$ \D u \geq f^{1/n} \text{ for all } H \in H_n^+ \text{ and } det H = n^{-n} .$$
\end{proof}

As a consequence of Proposition \ref{equi}, we give a new description of the classical Perron-Bremermann family of subsolutions to the Dirichlet problem $ Dir(\Om,\fii,f)$.

\begin{definition}
 We denote  $\fami$  the family of subsolutions of   $ Dir(\Om,\fii,f) $, that is
\begin{center}
$ \fami= \{ v \in PSH(\Om) \cap \Cc(\Omb), v|_{\Omf} \leq \fii \;and\; \D v \geq f^{1/n} \; \forall H \in H_n^+, detH=n^{-n} \} $.\\

\end{center}

\end{definition}

\smallskip

\begin{Remark}
We observe that   $ \fami \neq \emptyset $. Indeed, let $\rho$ as in  Definition \ref{lip} and  $A,B > 0$ big enough, then $ A \rho -B \in \fami$. 
\smallskip

Furthermore, the family $\fami$ is stable  under finite maximum, that is 
 if  $u,v \in \fami $ then  $ max(u,v)\in \fami $. 

\end{Remark}

\medskip

\section{The Perron-Bremermann envelope}

Bedford and Taylor proved in \cite{BT76} that  the  unique solution to $ Dir(\Om,\fii, f)$ in a bounded strongly pseudoconvex domain with smooth boundary, is given as the envelope of Perron-Bremermann 
$$ u = \sup \{ v ; v \in \mathcal{B}(\Om, \fii,f) \}$$
where $ \mathcal{B}(\Om,\fii,f) = \{ v \in PSH(\Om) \cap \Cc(\Omb) , v|_{\Omf} \leq \fii \; and \; (dd^c v)^n \geq f \beta_n \}. $
\\
Thanks to Proposition \ref{equi}, we get the following corollary
\begin{corollary} \label{coincide}
The two families $ \fami$ and $\mathcal{B}(\Om,\fii,f)$ coincide, that is
 $\fami= \mathcal{B}(\Om,\fii,f) $.
\end{corollary}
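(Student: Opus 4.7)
The proof plan is essentially to observe that Corollary \ref{coincide} is a direct consequence of Proposition \ref{equi}. Both families $\fami$ and $\mathcal{B}(\Om,\fii,f)$ are defined by three conditions: (i) the regularity $v \in PSH(\Om) \cap \Cc(\Omb)$; (ii) the boundary inequality $v|_{\Omf} \leq \fii$; and (iii) an interior differential inequality that formally differs between the two families. Conditions (i) and (ii) are literally identical in the two definitions, so the only thing left to verify is that the two interior inequalities are equivalent on this class of functions.

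For any candidate $v \in PSH(\Om) \cap \Cc(\Omb)$, continuity on the compact set $\Omb$ forces $v$ to be bounded, so $v \in PSH(\Om) \cap L^\infty(\Om)$. Together with the standing hypothesis $0 \leq f \in \Cc(\Omb)$, this is precisely the setting of Proposition \ref{equi}, which asserts the equivalence
$$\D v \geq f^{1/n} \text{ (weakly) for every } H \in H_n^+ \text{ with } \det H = n^{-n} \iff (dd^c v)^n \geq f\beta^n,$$
the right-hand inequality being understood in the sense of currents. Applying this equivalence to each such $v$ immediately yields $v \in \fami \iff v \in \mathcal{B}(\Om,\fii,f)$, and the desired set equality follows.

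There is no real obstacle to overcome here. The substantive content of the argument, namely the viscosity-style reduction of the complex Monge-Amp\`ere inequality to the family of scalar inequalities $\D v \geq f^{1/n}$ indexed by $H \in H_n^+$ with $\det H = n^{-n}$, has already been carried out in the proof of Proposition \ref{equi}. Corollary \ref{coincide} is essentially a bookkeeping statement recording that the Perron-Bremermann envelope can be described interchangeably via either formulation of the interior inequality; this will later be used to compare subsolutions produced by the Laplacian-type operators $\D$ with those produced by the genuine Monge-Amp\`ere operator.
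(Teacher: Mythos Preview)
Your proposal is correct and matches the paper's approach exactly: the paper simply states that the corollary follows from Proposition~\ref{equi}, and your write-up spells out precisely why---the regularity and boundary conditions coincide, and the equivalence of the two interior inequalities for bounded psh functions with continuous $f$ is Proposition~\ref{equi}.
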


Here we will first give an alternative description of the Perron-Bremermann envelope in a bounded SHL domain. 

More precisely, we consider the upper envelope
$$\U(z) = \sup \{ v(z); v \in \fami \}.$$

\subsection{Continuity of the upper envelope}

Following the same argument in \cite{Wal69,Bl96}, we prove the continuity of the upper envelope.

\begin{theorem}\label{con}
Let $\Om \subset \C^n$ be a bounded SHL domain, $ 0 \leq f\in \Cc(\Omb) $ and $\fii\in\Cc(\Omf)$. Then the upper envelope 
$$ \U= sup\{v;\; v \in \fami\} $$
belongs to $\fami$  and $ \U = \fii$ on $ \Omf$.
\end{theorem}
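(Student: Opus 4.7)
\emph{Proof plan.} My plan is to follow the classical Walsh--Bedford--Taylor--Blocki scheme, adapting it to the SHL setting through the Lipschitz psh defining function $\rho$ of Definition~\ref{lip}. The argument splits into four parts: (i) construct global upper and lower barriers for $\fami$ from $\rho$; (ii) squeeze $\U$ between them to obtain boundary continuity and $\U|_{\Omf}=\fii$; (iii) verify the differential inequality for the upper semicontinuous regularization $\U^{*}$; (iv) close the loop by showing $\U^{*}=\U$, so $\U\in\fami$.

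For (i), extend $\fii$ to a continuous $\tilde\fii$ on $\Omb$ and approximate it uniformly by smooth $\fii_\ep$ with $\|\fii_\ep-\tilde\fii\|_{\infty}\le\ep$. Using $dd^c\rho\ge c\beta$, for $A=A(\ep)$ large (depending on $c$, $\|D^{2}\fii_\ep\|_{\infty}$ and $\|f^{1/n}\|_{\infty}$) the function $w^{-}_\ep:=\fii_\ep+A\rho-\ep$ is psh on $\Om$, continuous on $\Omb$, satisfies $w^{-}_\ep|_{\Omf}\le\fii$, and $\D w^{-}_\ep\ge f^{1/n}$ for every admissible $H$; hence $w^{-}_\ep\in\fami$. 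Symmetrically, $w^{+}_\ep:=\fii_\ep-A\rho+\ep$ is plurisuperharmonic on $\Om$ for $A$ large, since $-A\,dd^c\rho\le-Ac\beta$ dominates $dd^c\fii_\ep$. For every $v\in\fami$, the function $v-w^{+}_\ep$ is psh on $\Om$ with non-positive boundary values, so the maximum principle gives $v\le w^{+}_\ep$ throughout $\Om$.

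Combining these, $w^{-}_\ep\le\U\le\U^{*}\le w^{+}_\ep$ pointwise on $\Omb$, while on $\Omf$ both $w^{\pm}_\ep$ equal $\fii_\ep\mp\ep$, within $2\ep$ of $\fii$. Letting $\ep\to0$ shows that $\U$ and $\U^{*}$ both extend continuously to $\Omf$ with boundary value $\fii$, settling (ii). For (iii), since $\fami$ is stable under finite maxima, Choquet's lemma produces an increasing sequence $v_k\in\fami$ with $(\sup_k v_k)^{*}=\U^{*}$; each $v_k$ satisfies $(dd^c v_k)^n\ge f\beta^n$ by Proposition~\ref{equi}, so the Bedford--Taylor monotone convergence theorem yields $(dd^c\U^{*})^n\ge f\beta^n$, equivalently $\D\U^{*}\ge f^{1/n}$ for every admissible $H$.

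The main technical point is (iv): upgrading the boundary continuity of $\U$ to continuity on all of $\Omb$. I would adapt Walsh's translation argument from \cite{Wal69,Bl96}: using the boundary modulus of continuity from (ii) together with the Lipschitz constant of $\rho$, for $z,z'\in\Om$ close one modifies any $v\in\fami$ into a competitor $\tilde v\in\fami$ satisfying $\tilde v(z')\ge v(z)-C\omega(|z-z'|)$; taking suprema over $v$ and swapping $z,z'$ yields a uniform modulus of continuity for $\U$, and hence $\U=\U^{*}\in\fami$. The delicate part is carrying out this translation surgery when $\rho$ is only Lipschitz: the inequality $dd^c\rho\ge c\beta$ is exactly what makes the correction term $A\rho$ strong enough to absorb both the translation error in the $\D$-inequality and the loss of regularity of the boundary, which is why the SHL hypothesis is essential here.
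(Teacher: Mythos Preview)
Your proposal is correct and follows essentially the same Walsh--B\l ocki scheme as the paper's proof: barriers built from $\rho$ for the boundary behaviour, the translation argument of \cite{Wal69,Bl96} for interior continuity, and Choquet's lemma plus monotone convergence for the differential inequality. The only notable difference is cosmetic: you use the plurisuperharmonic upper barrier $\fii_\ep-A\rho+\ep$, whereas the paper invokes the harmonic extension of $\fii$; your choice is arguably cleaner since it avoids appealing to solvability of the Laplace--Dirichlet problem on an SHL domain.
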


\begin{proof}
Let  $ g \in \Cc^2 (\Omb) $ be an approximation of $ \fii$ such that $ |g - \fii| < \epsilon $ on $\Omf$, for $\epsilon>0$. Let also $\rho$ the defining function as in  Definition \ref{lip} and $ A>0 $ large enough such that $ v_0:= A \rho + g - \epsilon $ belongs to $\fami$.

Thus $ v_0 \leq \U  \leq h $, where $h$ be the  harmonic extension of $\fii$ to $\Om$.
Then it follows that  $ \fii - 2\epsilon \leq g- \epsilon \leq \U \leq \fii $ on  $\Omf$, as $\epsilon$ tends to $0$, we see that $ \U = \fii $ on $\Omf$.
\smallskip

We will prove that  $ \U$ is continuous on  $ \Om$ .
Fix $\ep>0 $ and $z_0$ in a compact set $K \subset \Om$. 

\noindent
Thanks to the continuity of  $h$ and $v_0$   on $\Omb$, 
one can find  $\delta> 0 $ such that for any   $ z_1,z_2 \in\Omb $ we have
\begin{center}
$ | h(z_1)-h(z_2) | < \ep $, $ | v_0(z_1)- v_0(z_2) | < \ep $  if $ | z_1-z_2 | < \delta $.
\end{center}
Let $ a \in \C^n $ such that  $ | a | < min(\delta, dist(K,\partial \Omega)) $. Since $\U$ is the upper envelope, we can find  $ v \in\fami$ such that  $ v(z_0+ a) > \U(z_0+a)-\ep $ and we can assume that $ v_0 \leq v $.

\noindent
Hence for all $ z \in \Omb$  and $ w \in \Omf$ we get
 $$-3\ep < v_0(z)-\fii(w) < v(z)-\fii(w)< h(z)-\fii(w) < \ep,$$
this implies that 
\begin{equation}\label{v}
  | v(z)- \fii(w) | < 3 \ep  \text{   if  } | z-w | <\delta .
\end{equation}
Then for $z \in \Om $ and $ z+a \in \Omf$, we have 
$$ v(z+a) \leq \fii(z+a)< v(z)+ 3 \ep . $$
We define the following function 
\begin{center}

$ v_1(z)=\left\{ \begin{array}{ll}
                   v(z)  & ;z+a \notin \Omb \\
                    max( v(z) ,v(z+a)-3\ep) & ;z+a \in \Omb\\
                   \end{array} \right. $    

\end{center}                                      
which is well defined, plurisubharmonic on  $\Om$, continuous on $\Omb$ and $ v_1 \leq \fii$ on  $\Omf $.
Indeed, if $ z \in \Omf $, $z+a \notin \Omb $ then $ v_1(z)= v(z) \leq \fii(z)$. On the other hand, if $ z \in \Omf $ and $z+a \in \Omb $ then we have, from \ref{v}, that
$$ v(z+a) - 3 \ep < \fii(z), $$ 
so $ v_1(z)= \max( v(z), v(z+a)-3 \ep ) \leq \fii(z) $.

\noindent
Moreover, we note that  $\D( v(.+a))\geq f^{1/n}(.+a)$, hence it follows that 
$$ \D v_1 \geq min(f^{1/n} , f^{1/n}(.+a)) .$$
Let $\omega$ be the modulus of continuity of $f^{1/n} $  and define
 $$ v_2= v_1 +\omega(| a |) ( v_0 - \| v_0 \|_{L^\infty(\Omb)}). $$
We claim that $ v_2  \in \fami$. It is clear that $ v_2 \in PSH(\Om) \cap \Cc(\Omb)$ and $ v_2 \leq \fii$ on $ \Omf$. Moreover, One can point out  that 
$$\D v_2 = \D v_1 + \omega(| a | ) \D v_0 \geq f^{1/n}.$$
In fact, if  $\D v_1 = f^{1/n}(.+a) $, by suitable choice of $A$ we get  
$$ \D v_2= f^{1/n}(.+a) +\omega(| a | ) \D v_0 \geq -\omega(| a |) + \omega(| a |) \D v_0 + f^{1/n} \geq f^{1/n}. $$ 
Hence we obtain that

\begin{tabular}{ll}

 $ \U(z_0 ) $  &  $\geq v_1(z_0 ) +\omega (| a | ) v_0(z_0 ) -\omega(| a |) \| v_0 \|$  \\ 
                & $ \geq v(z_0+a) -5 \ep  \;\; \; (where \; \; \omega(| a | ) <\frac{\ep}{\| v_0 \|}) $ \\
                & $ > \U(z_0+a)- 6 \ep $.  \\
 
\end{tabular} \\
Since $|a| $ is small and the last inequality is true for every $z_0 \in K$, then $\U$ is continuous on $\Om$.
\\
As the family $\fami$ is stable  under the operation maximum, we can find a sequence   $ u_j \in \fami$ such that  $ u_j $ increases almost everwhere to $\U $, then $u_j \rightarrow \U $ in $L^1(\Om)$. Hence  $\D \U = \lim \D u_j \geq f^{1/n} $  for all $H \in H_n^+$, $ detH=n^{-n}$, this implies $ \U \in \fami$.
\end{proof}
\medskip
 
\begin{proposition}\label{solution}
Let $\Om \subset  \C^n $ be a bounded strongly hyperconvex Lipschitz domain, $ 0 \leq f \in \Cc(\Omb) $  and $ \fii \in \Cc(\Omf) $. Then the Dirichlet problem  $ Dir(\Om,\fii,f)$ has a unique solution $\U$. Moreover the solution is given by 
$$ \U = \sup \{ v ; v \in \fami \} $$
where
$$ \mathcal{V} =  \{ v \in PSH(\Om) \cap \Cc(\Omb), v|_{\Omf} \leq \fii \;and\; \D v \geq f^{1/n} \; \forall H \in H_n^+, detH=n^{-n} \} $$
and $ \D $ be the laplacian associated to a positive definite Hermitian matrix $H$ as in (\ref{laplacian}).
\end{proposition}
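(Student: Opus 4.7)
The plan is straightforward thanks to the continuity result already in place. By Theorem \ref{con}, the envelope $\U$ belongs to $\fami$ and satisfies $\U = \fii$ on $\Omf$; in particular $\U \in PSH(\Om) \cap \Cc(\Omb)$ with $\D \U \geq f^{1/n}$ for every admissible $H$. Proposition \ref{equi} then converts this into $(dd^c \U)^n \geq f \beta^n$ in $\Om$. It remains to establish the reverse Monge-Amp\`ere inequality and uniqueness.

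For the reverse inequality I would run the classical balayage argument. Fix $z_0 \in \Om$ and choose a ball $B = B(z_0,r) \Subset \Om$. The ball is a smooth strongly pseudoconvex domain, the restriction $\U|_{\partial B}$ is continuous, and $f \in \Cc(\bar{B})$, so the Bedford-Taylor theorem \cite{BT76} provides a unique continuous plurisubharmonic solution $\hat \U$ of $Dir(B,\U|_{\partial B},f)$. The comparison principle forces $\hat \U \geq \U$ on $B$. Define
$$
w(z) = \begin{cases} \hat \U(z) & \text{if } z \in \bar{B}, \\ \U(z) & \text{if } z \in \Omb \setminus \bar{B}. \end{cases}
$$
Since $\hat \U = \U$ on $\partial B$, the function $w$ is continuous on $\Omb$ and plurisubharmonic on $\Om$. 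The standard Bedford-Taylor gluing fact gives $(dd^c w)^n \geq f \beta^n$ on $\Om$, so by Proposition \ref{equi} again $\D w \geq f^{1/n}$ for every admissible $H$; as $w = \fii$ on $\Omf$, we conclude $w \in \fami$. The definition of $\U$ forces $w \leq \U$, which combined with $w \geq \U$ yields $w = \U$ throughout $\Omb$. In particular $\hat \U = \U$ on $B$, so $(dd^c \U)^n = (dd^c \hat \U)^n = f \beta^n$ on $B$; since $z_0$ was arbitrary, $(dd^c \U)^n = f \beta^n$ on all of $\Om$.

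Uniqueness is immediate from the Bedford-Taylor comparison principle for bounded plurisubharmonic functions on a bounded hyperconvex domain: two continuous plurisubharmonic solutions with the same boundary data and the same Monge-Amp\`ere measure must agree on $\Omb$.

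The main technical point is checking that the glued function $w$ really belongs to $\fami$. Plurisubharmonicity across $\partial B$ is standard once $\hat \U \geq \U$ on $B$ with equality on $\partial B$; the Monge-Amp\`ere inequality across $\partial B$ is the familiar gluing fact, which works because $w$ is continuous and hence $\partial B$ carries no Monge-Amp\`ere mass. This is precisely where Proposition \ref{equi} earns its keep: it bridges between the Monge-Amp\`ere formulation, in which the gluing is transparent, and the $\D$ formulation that defines membership in $\fami$, making the admissibility class stable under the balayage operation.
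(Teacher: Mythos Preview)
Your argument is correct and follows the classical balayage route: Theorem \ref{con} gives $\U \in \fami$ with $\U=\fii$ on $\Omf$, Proposition \ref{equi} yields $(dd^c\U)^n \geq f\beta^n$, and solving the Dirichlet problem on small balls via \cite{BT76} forces equality. The paper takes a different, much shorter path: since Theorem \ref{con} (applied with $f\equiv 0$) shows that every $\fii\in\Cc(\Omf)$ admits a continuous plurisubharmonic extension to $\Omb$, the domain $\Om$ is $B$-regular in the sense of Sibony, and existence is obtained by invoking B\l ocki's theorem \cite[Theorem 4.1]{Bl96} as a black box; the identification of that solution with the envelope then follows from Corollary \ref{coincide} and the comparison principle. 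Your approach is more self-contained, needing only \cite{BT76} on Euclidean balls rather than the $B$-regular machinery, while the paper's is quicker once the external result is granted. One small remark: the gluing step for $w$ is actually cleanest done directly in the $\D$-formulation (the distributional Laplacian is local, $\D w$ is a positive measure, and $\partial B$ is Lebesgue-null), so the detour through the Monge-Amp\`ere measure and back via Proposition \ref{equi} is not really needed.
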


\begin{proof}
The uniqueness follows from the comparison principle (\cite{BT76}). On the other hand, Theorem \ref{con} implies that our domain $\Om$ is $B$-regular in the sense of Sibony (\cite{Sib87}). Therefore  existence and uniqueness of the solution follows from  Theorem 4.1 in \cite{Bl96}. The description of the solution given in the proposition follows from Corollary~\ref{coincide} and Theorem \ref{con}.
\end{proof}

\begin{Remark}\label{stab}
Let $ \fii_1 , \fii_2 \in \Cc(\partial \Om)$ and $ f_1,f_2 \in \Cc( \Omb)$, then the solutions $ \U_1 = \U(\Om, \fii_1,f_1)$ , $ \U_2 = \U(\Om,\fii_2, f_2)$ satisfy the following stability estimate
\begin{equation}\label{stab}
 \|\U_1-\U_2 \|_{L^\infty (\Omb)} \leq d^2 \| f_1 - f_2\|^{1/n}_{L^\infty (\Omb)} + \| \fii_1 - \fii_2 \|_{L^\infty (\Omf)} 
\end{equation}
where $ d:= diam(\Om)$.
Indeed, fix $ z_0 \in \Om$ and define 
$$ v_1(z) = \| f_1 -f_2 \|^{1/n}_{L^{\infty}(\Omb)} ( |z-z_0|^2-d^2) + \U_2(z) $$
and
$$ v_2(z)= \U_1(z) + \| \fii_1- \fii_2\|_{L^{\infty}(\Omf)}.$$
It is clear that $ v_1 , v_2 \in PSH(\Om) \cap \Cc(\Omb)$. Hence, by the comparison principle, we get 
$ v_1 \leq v_2 $ on $ \Omb$. Then we conclude that
$$ \U_2 - \U_1 \leq d^2 \| f_1 -f_2 \|^{1/n}_{L^{\infty}(\Omb)} + \| \fii_1 - \fii_2\|_{L^{\infty}(\Omf)}$$
Reversing the roles of $\U_1$ and $\U_2$, we get the inequality (\ref{stab}).

We will need in  Section \ref{section Lp} an estimate, proved by Blocki in \cite{Bl93},  for the $L^n-L^1$ stability of solutions to the Dirichlet problem $Dir(\Om,\fii,f)$
\begin{equation}\label{Ln-L1}
\|\U_1-\U_2 \|_{L^n (\Om)} \leq \lambda(\Om) \| \fii_1 - \fii_2 \|_{L^\infty (\Omf)} + \frac{r^2}{4}  \| f_1 - f_2\|^{1/n}_{L^1 (\Om)} 
\end{equation}
where $ r = min \{ r' >0 : \Om \subset B(z_0,r')\; for \; some \; z_0 \in \C^n \} $. 
\end{Remark}

\section{The modulus of continuity of  Perron-Bremermann envelope}

Recall that a real function $\omega$ on $[0,l]$, $0<l<\infty$, is called a modulus of continuity if $\omega$ is continuous, subadditive, nondecreasing and $\omega(0)=0$.\\
In general, $\omega$ fails to be concave, we denote $\bar{\omega}$ to be the minimal concave majorant of $\omega$. 
The following  property of the minimal concave majorant $\bar{\omega}$ is well known (see \cite{Kor82} and \cite{Ch14}).

\begin{lemma}\label{mod}
Let $\omega$ be a modulus of continuity on $[0,l]$ and  $\bar{\omega}$  be the minimal concave majorant of $\omega$. Then $ \omega(\eta t)< \bar{\omega}(\eta t) < (1+\eta) \omega(t)$ for any $t>0$ and $\eta >0$.
\end{lemma}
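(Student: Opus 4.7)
The left inequality $\omega(\eta t) \leq \bar\omega(\eta t)$ is immediate from the very definition of $\bar\omega$ as a majorant of $\omega$, so the real content is the right-hand estimate $\bar\omega(\eta t) \leq (1+\eta)\omega(t)$. My plan is to prove a pointwise comparison of the form $\omega(s) \leq (1 + s/t)\omega(t)$ for all admissible $s$ using only subadditivity and monotonicity of $\omega$, and then to transfer this estimate to $\bar\omega$ via the standard concave-hull representation.

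First I would establish the preliminary inequality: for every $s \in [0,l]$ and every $t \in (0,l]$,
$$\omega(s) \leq \bigl(1 + s/t\bigr)\,\omega(t).$$
If $s \leq t$ this follows from monotonicity alone, since $\omega(s) \leq \omega(t) \leq (1 + s/t)\omega(t)$. If $s > t$, I write $s = kt + r$ with $k = \lfloor s/t \rfloor \geq 1$ and $0 \leq r < t$; subadditivity of $\omega$ together with monotonicity then gives
$$\omega(s) \leq k\,\omega(t) + \omega(r) \leq (k+1)\,\omega(t) \leq (1 + s/t)\,\omega(t).$$

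Next I would invoke the classical representation of the minimal concave majorant on an interval starting at $0$ (with $\bar\omega(0) = \omega(0) = 0$):
$$\bar\omega(s) = \sup\Bigl\{\, \lambda\,\omega(a) + (1-\lambda)\,\omega(b) \;:\; 0 \leq a \leq b \leq l,\; \lambda \in [0,1],\; \lambda a + (1-\lambda)b = s \,\Bigr\}.$$
Applying the preliminary inequality separately to $\omega(a)$ and $\omega(b)$ and taking the convex combination yields
$$\lambda\,\omega(a) + (1-\lambda)\,\omega(b) \leq \omega(t)\,\Bigl[1 + \bigl(\lambda a + (1-\lambda)b\bigr)/t\Bigr] = (1 + s/t)\,\omega(t).$$
Specializing to $s = \eta t$ and taking the supremum over admissible $(a,b,\lambda)$ gives exactly $\bar\omega(\eta t) \leq (1+\eta)\,\omega(t)$, completing the proof.

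The one point that will need care is the concave-hull representation when $\eta t$ is close to $l$ or when one would like $\eta$ arbitrarily large: in the former case the representation is standard, in the latter one implicitly extends $\omega$ beyond $l$ by the constant $\omega(l)$, which preserves subadditivity and leaves the argument intact. This is a bookkeeping issue rather than a genuine obstacle; the substantive step is the subadditivity computation $\omega(s)\leq (1+s/t)\omega(t)$, which is the engine driving the whole estimate.
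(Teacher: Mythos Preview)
Your proof is correct. The paper itself does not supply a proof of this lemma; it simply records the statement as well known and refers to \cite{Kor82} and \cite{Ch14}. Your argument --- the subadditivity estimate $\omega(s) \leq (1+s/t)\,\omega(t)$ followed by passage to the concave hull --- is the standard one found in those references.

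One small simplification you might enjoy: once the inequality $\omega(s) \leq (1+s/t)\,\omega(t)$ is in hand for all admissible $s$, the explicit sup-representation of $\bar\omega$ is not really needed. The map $s \mapsto (1+s/t)\,\omega(t)$ is affine, hence concave, and it majorizes $\omega$; by minimality of $\bar\omega$ among concave majorants it therefore majorizes $\bar\omega$ as well, yielding $\bar\omega(\eta t) \leq (1+\eta)\,\omega(t)$ in one stroke. This collapses your last two steps into a single observation.

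A final remark: the lemma in the paper is stated with strict inequalities, but equality can certainly occur (for instance the left inequality is an equality whenever $\omega$ is already concave, and the right inequality is an equality at $\eta=0$ when $\omega(t)>0$). You have proved the non-strict versions, and that is exactly what is applied later in the paper (see the proof of Proposition~\ref{subs}), so this is the appropriate form to establish.
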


\subsection{Modulus of continuity of the solution}
Now, we will start the first step to establish an estimate for the modulus  of continuity of the solution to $ Dir(\Omega, \fii, f)$. For this reason, it is natural to require the relation between the modulus of continuity of $\U$ and the modulus of continuity of sub-barrier and super-barrier. Thus, we present the following proposition

\begin{proposition}\label{mix}
Let $ \Om  \subset \C^n $ be a bounded SHL domain,  $\fii \in \Cc(\Omf)$ and $ 0 \leq f \in \Cc(\Omb) $. Suppose that there exist $ v \in \fami $ and $ w \in SH(\Om) \cap \Cc(\Omb)$ such that  $ v= \fii = -w $ on $\Omf$, then there is a constant $C>0$ depends on $diam(\Om)$ such that the modulus of continuity of $\U$ satisfies 
$$ \omega_\U(t) \leq C \max \{ \omega_v (t), \omega_w(t) , \omega_{f^{1/n}}(t) \}.$$
\end{proposition}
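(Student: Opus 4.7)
The strategy is to compare $\U(z+a)$ with $\U(z)$ for small translations $a \in \C^n$ by constructing an appropriate competitor in $\fami$. The barriers $v$ and $-w$ sandwich the solution, $v \leq \U \leq -w$ on $\Omb$: the lower inequality holds since $v \in \fami$, while the upper one follows from the maximum principle applied to the subharmonic function $\U + w$ (a sum of two subharmonic functions), which vanishes on $\Omf$. Both sides of the sandwich will be used at boundary points of the shifted domain.

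Fix $z_0 \in \Om$ and set $d := \mathrm{diam}(\Om)$, so $|z - z_0|^2 \leq d^2$ on $\Omb$. For $a \in \C^n$ with $|a|$ small, let $\Om_a := \Om \cap (\Om - a)$ and $K(a) := \max\{\omega_v(|a|), \omega_w(|a|)\}$. On $\overline{\Om_a}$ define
$$F(z) := \U(z+a) + \omega_{f^{1/n}}(|a|)\bigl(|z - z_0|^2 - d^2\bigr) - K(a).$$
Then $F$ is plurisubharmonic and continuous on $\overline{\Om_a}$. Since $\D(|z - z_0|^2) = \mathrm{tr}(H) \geq n(\det H)^{1/n} = 1$ by the arithmetic--geometric mean inequality (with $\det H = n^{-n}$),
$$\D F(z) \geq f^{1/n}(z+a) + \omega_{f^{1/n}}(|a|) \geq f^{1/n}(z)$$
for every admissible $H$, so $F$ satisfies the required subsolution inequality on $\Om_a$.

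Next I verify $F \leq \U$ on $\partial \Om_a \subset \Omf \cup (\Omf - a)$. If $z \in \Omf$, then $\U(z+a) \leq -w(z+a) \leq -w(z) + \omega_w(|a|) = \U(z) + \omega_w(|a|)$; if $z+a \in \Omf$, then $\U(z+a) = \fii(z+a) = v(z+a) \leq v(z) + \omega_v(|a|) \leq \U(z) + \omega_v(|a|)$. Since $|z - z_0|^2 - d^2 \leq 0$ and $K(a)$ absorbs both $\omega_v(|a|)$ and $\omega_w(|a|)$, in both cases $F(z) \leq \U(z)$. Now define $u_1 := \max(F, \U)$ on $\overline{\Om_a}$ and $u_1 := \U$ on $\Om \setminus \overline{\Om_a}$. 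The boundary inequality just established makes $u_1$ continuous on $\Omb$, plurisubharmonic on $\Om$ by the standard psh gluing lemma, and equal to $\fii$ on $\Omf$; the stability of $\fami$ under finite maximum then places $u_1$ in $\fami$. Hence $u_1 \leq \U$ by definition of the envelope, forcing $F \leq \U$ on $\Om_a$.

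Unwinding this inequality yields
$$\U(z+a) - \U(z) \leq K(a) + d^2\,\omega_{f^{1/n}}(|a|) \leq (1 + d^2)\max\{\omega_v(|a|), \omega_w(|a|), \omega_{f^{1/n}}(|a|)\},$$
and applying the same construction with $-a$ in place of $a$ gives the reverse estimate. Setting $C := 1 + d^2$ (which depends only on $\mathrm{diam}(\Om)$) completes the argument. The main obstacle, as I see it, is calibrating the three correction terms in $F$: the quadratic $\omega_{f^{1/n}}(|a|)(|z-z_0|^2 - d^2)$ is indispensable for restoring the subsolution inequality lost when translating $f^{1/n}$, while $K(a)$ must simultaneously dominate both $\omega_v(|a|)$ and $\omega_w(|a|)$ so that the boundary comparison on $\partial \Om_a$ goes through on whichever face of the shifted domain one lands; any weakening of either correction would break either the subsolution property or the gluing step into $\fami$.
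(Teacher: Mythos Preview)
Your proof is correct and follows the same translation-and-gluing strategy as the paper: shift $\U$ by a small vector, add a quadratic corrector to restore the subsolution inequality, subtract a constant to force the boundary comparison, glue with $\U$, and conclude from maximality of the envelope. The only (cosmetic) difference is that the paper uses the single quantity $g(|\tau|)=\max\{\omega_v,\omega_w,\omega_{f^{1/n}}\}(|\tau|)$ both as the coefficient of the quadratic and as the subtracted constant, whereas you separate the roles, putting only $\omega_{f^{1/n}}(|a|)$ in front of $|z-z_0|^2-d^2$ and only $K(a)=\max\{\omega_v,\omega_w\}(|a|)$ as the additive shift; either choice yields the same constant $C=1+d^2$. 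One phrasing nit: when you invoke ``stability of $\fami$ under finite maximum'' for $u_1$, bear in mind that $F$ is only defined on $\Om_a$, so strictly speaking you are using the local fact that $\D\max(F,\U)\geq f^{1/n}$ on $\Om_a$ together with the gluing lemma, not membership of $F$ in $\fami$; the argument is nonetheless valid.
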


\begin{proof}
Let us put $ g(t) := max(\omega_v(t), \omega_w(t) ,\omega_{f^{1/n}}(t))$ and $ d:= diam(\Om)$. As $ v=\fii=-w$ on $\Omf$ we have for all $ z \in \Omb$ and $ \xi \in \Omf$
$$ -g( | z-\xi |) \leq v(z) - \fii(\xi) \leq \U(z) - \fii(\xi) \leq -w(z) - \fii(\xi) \leq g(| z-\xi |). $$
Hence we get
\begin{equation}\label{eq}
| \U(z) - \U(\xi) | \leq g(| z-\xi |) ; \forall z \in \Omb , \forall \xi \in \Omf .
\end{equation}
Fix a point $ z_0 \in \Om$, for any small vector $ \tau \in \C^n $, we set $ \Omega_{-\tau} := \{ z-\tau ; z \in \Omega \} $ and define in $ \Omega \cap \Omega_{-\tau } $ the function
 $$v_1(z)= \U(z+\tau)+ g(|\tau |) | z-z_0 |^2 - d^2 g(| \tau |) -  g(| \tau |) $$ 
which is well defined psh function in $\Omega \cap \Omega_{-\tau } $  and continuous on $ \Omb \cap \bar{\Omega}_{-\tau}$. 
By (\ref{eq}),  if $ z \in \Omb \cap \Omf_{-\tau}$ we can see that
\begin{equation}\label{eq1}
v_1(z) - \U(z) \leq g(| \tau |)   + g(| \tau |) | z -z_0|^2 - d^2 g(| \tau |) - g(| \tau |) \leq 0.
\end{equation}
Moreover, we assert that $\D v_1 \geq f^{1/n} $ in $ \Omega \cap \Omega_{-\tau } $ for all $ H \in H_n^+, det H= n^{-n}$. Indeed, we have

\begin{center}
$ \begin{array}{ll}
 \D v_1(z)  & \geq f^{1/n}(z+\tau) +  g(| \tau |) \D(| z-z_0 |^2) \\
   & \geq f^{1/n}(z+\tau) +  g(| \tau | ) \\
   & \geq f^{1/n}(z+\tau) + | f^{1/n}(z+\tau) - f^{1/n}(z) |  \\
   & \geq f^{1/n}(z) \\
\end{array}$
\end{center} 
for all $ H \in H_n^+ $ and $ det H= n^{-n} $.
\\
Hence, by the last properties of $ v_1$ , we find that 
\begin{center}
$ V_{\tau}(z)= \left\{ \begin{array}{ll}
                   \U(z) & ; z \in \Omb \setminus  \Om_{-\tau} \\
                   max(\U(z),v_1(z)) & ; z \in \Omb \cap \Om_{-\tau}   \\ 
\end{array} \right. $ 
\end{center}
is well defined function and belongs to $PSH(\Om) \cap \Cc(\Omb)$.
It is clear that  $\D V_\tau \geq f^{1/n} $ for all $ H \in H_n^+, det H= n^{-n}$. 
We claim that $ V_\tau = \fii $ on $\Omf$. If $ z \in \Omf \setminus \Om_{-\tau} $ then $ V_\tau(z) = \U(z) = \fii(z)$. On the other hand $ z \in \Omf \cap \Omega_{-\tau}$, by(\ref{eq1}) we get 
$ V_\tau (z) = \max(\U(z) , v_1(z)) = \U(z) = \fii(z) $.
Consequently $ V_\tau \in \fami$ and this implies that
$$ V_\tau (z) \leq \U(z) ; \forall z \in \Omb.$$
Then we have for all $ z \in \Omb \cap \Om_{-\tau} $
$$ \U(z+\tau)+ g(|\tau |) | z -z_0|^2 - d^2 g(| \tau |) -  g(| \tau |) \leq \U(z) .$$
Hence,
$$\U(z+\tau) - \U(z) \leq (d^2 + 1) g(| \tau |) -  g(| \tau |) . | z -z_0|^2 \leq C g(| \tau |) .$$
Reversing the roles of  $ z+\tau $ and $ z $, we get 
 $$ | \U(z+\tau) - \U(z) | \leq C g(| \tau | ) ; \forall z, z+\tau \in \Omb .$$
Thus, we finally get
 $$ \omega_\U (|\tau|) \leq C \max(\omega_v(|\tau|), \omega_w(|\tau|) ,\omega_{f^{1/n}}(|\tau|)).  $$
\end{proof}

\begin{Remark}
Let $ H_\fii$ be the harmonic extension of $\fii$ in a bounded SHL domain $\Om$, we can replace  $ w$ in the last proposition by $H_\fii$. It is known in the classical harmonic analysis (see \cite{Ai10}) that the harmonic extension $H_\fii$ has not, in general,  the same modulus of  continuity of $\fii$.
\\
Let us define, for small positive $t$, the modulus of continuity 
$$ \psi_{\alpha,\beta}(t) = (-log(t))^{-\alpha} t^{\beta} $$
with $ \alpha \geq 0$ and $ 0 \leq \beta < 1$. It is clear that $ \psi_{\alpha,0}$ is weaker than the H\"older continuity and $ \psi_{0,\beta} $ is the  H\"older continuity. It was shown in \cite{Ai02} that $\omega_{H_\fii}(t) \leq c \psi_{0,\beta}(t)$ for some $ c>0$ if $ \omega_\fii(t) \leq c_1 \psi_{0,\beta}(t) $ for $ \beta < \beta_0$ where $ \beta_0 <1$ depending only on $ n $ and the Lipschitz constant of the defining function $\rho$. Moreover, a similar result was proved in \cite{Ai10} for the modulus of continuity $\psi_{\alpha,0}(t) $. However, the same argument of Aikawa gives that $\omega_{H_\fii}(t) \leq c \psi_{\alpha,\beta}(t)$ for some $ c>0$ if $ \omega_\fii(t) \leq c_1 \psi_{\alpha,\beta}(t) $ for $ \alpha \geq 0$ and $ 0 \leq \beta < \beta_0 <1 $.

Hence, this leads us to the conclusion that if there exists a barrier $ v$ to the Dirichlet problem such that 
$v=\fii$ on $\Omf$ and $ \omega_v(t) \leq \lambda \psi_{\alpha,\beta}(t)$ with $ \alpha, \beta$ as above, then the last proposition gives
$$ \omega_\U \leq \lambda_1 \max \{ \psi_{\alpha,\beta} (t) , \omega_{f^{1/n}}(t) \}, $$
where $\lambda_1 >0$ depending on $\lambda$ and $ diam(\Om)$.
\end{Remark}

\subsection{Construction of barriers}
In this subsection, we will construct a subsolution to Dirichlet problem with the boundary value $\fii$ and estimate its modulus of continuity.
\begin{proposition}\label{subs}
Let $ \Om  \subset \C^n $ be a bounded SHL domain, assume  that  $\fii \in \Cc(\Omf )$ and $ 0 \leq f \in \Cc(\Omb) $. Then there exists a subsolution $ v \in \fami$ such that $ v = \fii $ on $ \Omf$ and  the modulus of continuity of $v$ satisfies the following inequality  
  $$ \omega_v(t) \leq  \lambda (1+ \|f \|^{1/n}_{L^\infty(\Omb)})  \max\{ \omega_\fii(t^{1/2}) , t^{1/2} \}  $$
where  $\lambda  > 0$ depends on $\Om$.
\end{proposition}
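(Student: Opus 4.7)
The plan is to write $v$ as the supremum of an explicit one-parameter family of subsolutions $v_\delta$, each built by regularizing $\fii$ at scale $\delta$ and restoring plurisubharmonicity by adding a large enough multiple of the defining function $\rho$. The scale balance $\delta \sim t^{1/2}$ between the regularization error $\bar\omega_\fii(\delta)$ and the Lipschitz penalty $A_\delta L_\rho t$ (with the psh-restoring coefficient $A_\delta \sim \bar\omega_\fii(\delta)/\delta^2$) is exactly what produces the factor $\omega_\fii(t^{1/2})$ in the final modulus estimate.

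Concretely, first I would extend $\fii$ to $\tilde\fii \in \Cc(\C^n)$ by the McShane formula, so that $\tilde\fii$ inherits the modulus $\bar\omega_\fii$, and form the mollification $\fii_\delta := \tilde\fii * \chi_\delta$ with the standard estimates $\|\fii_\delta - \tilde\fii\|_\infty \leq \bar\omega_\fii(\delta)$, $|\fii_\delta(z) - \fii_\delta(z')| \leq \bar\omega_\fii(|z-z'|)$, and $\|D^2\fii_\delta\|_\infty \leq K\bar\omega_\fii(\delta)/\delta^2$. Then I would set
\[
v_\delta(z) := \fii_\delta(z) - \bar\omega_\fii(\delta) + (A_\delta + B)\rho(z),
\]
with $A_\delta := K\bar\omega_\fii(\delta)/(c\delta^2)$ and $B := \|f\|^{1/n}_{L^\infty(\Omb)}/c$. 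By construction $dd^c(\fii_\delta + A_\delta\rho) \geq 0$, so $v_\delta \in PSH(\Om)\cap\Cc(\Omb)$; the choice of $B$ gives $\D v_\delta \geq Bc \geq f^{1/n}$ for every admissible $H$; and the shift $-\bar\omega_\fii(\delta)$ forces $v_\delta|_{\Omf} \leq \tilde\fii = \fii$, so $v_\delta \in \fami$. Setting $v(z) := \sup_{\delta > 0} v_\delta(z)$, the bound $\rho \leq 0$ gives $v_\delta \leq \tilde\fii$ and hence $v$ is finite with $v \leq \tilde\fii$, while on $\Omf$ the estimate $v_\delta \geq \fii - 2\bar\omega_\fii(\delta)$ gives $v = \fii$.

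The heart of the proof is the modulus estimate. For $z, z' \in \Omb$ with $|z-z'| = t$ I would split on the size of $|\rho(z)|$, noting that the Lipschitz bound $|\rho(z) - \rho(z')| \leq L_\rho t$ couples the two cases so they exhaust all possibilities. In the \emph{interior case} $|\rho(z)| \geq C_0 t$, the key lemma is that the supremum defining $v(z)$ is effectively attained in the range $\delta \geq c_1\sqrt{|\rho(z)|}$: exploiting that $\bar\omega_\fii(\delta)/\delta^2$ is decreasing and that $\bar\omega_\fii$ is subadditive (both consequences of concavity), the penalty $A_\delta|\rho(z)|$ dominates $\bar\omega_\fii(\sqrt{|\rho(z)|})$ once $\delta \ll \sqrt{|\rho(z)|}$, so $v_\delta(z) \leq v_{c_1\sqrt{|\rho(z)|}}(z)$ for such $\delta$. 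For a $\delta^* \geq c_1\sqrt{|\rho(z)|} \geq c_1\sqrt{C_0 t}$ nearly attaining the sup,
\[
v(z) - v(z') \leq v_{\delta^*}(z) - v_{\delta^*}(z') \leq \bar\omega_\fii(t) + (A_{\delta^*} + B) L_\rho t,
\]
and monotonicity of $A_\delta$ together with $\delta^* \geq c_1\sqrt{C_0 t}$ turns $A_{\delta^*} L_\rho t$ into $O(\bar\omega_\fii(\sqrt t))$. In the \emph{boundary-layer case} $|\rho(z)| < C_0 t$ (so $|\rho(z')| \leq (C_0+L_\rho) t$ as well), I would instead evaluate $v \geq v_{\delta_z}$ at $\delta_z := \sqrt{|\rho(z)|/K}$, obtaining
\[
v(z) \geq \tilde\fii(z) - C_1(1+\|f\|^{1/n})\max\{\bar\omega_\fii(\sqrt{|\rho(z)|}),\sqrt{|\rho(z)|}\},
\]
and combine this with $v \leq \tilde\fii$ and the modulus of $\tilde\fii$ via the triangle inequality to recover the same bound.

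The main obstacle I expect is the interior-case lemma: showing quantitatively that the sup over $\delta$ is realized at $\delta \geq c_1\sqrt{|\rho(z)|}$ requires careful bookkeeping with the subadditivity $\bar\omega_\fii(\delta_0) \leq \lambda\bar\omega_\fii(\delta_0/\lambda)$ to compare the penalty $A_\delta|\rho(z)|$ to $\bar\omega_\fii(\sqrt{|\rho(z)|})$ for small $\delta$. Once this is in place, continuity of $v$ follows from the modulus estimate combined with the lower semicontinuity of the sup, membership in $\fami$ comes from stability of the class under suprema (and the bound $\D v \geq f^{1/n}$ inherited from the $v_\delta$), and the boundary equality has already been arranged.
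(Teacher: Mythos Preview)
Your approach is correct in outline and should go through, but it is genuinely different from the paper's. The paper does not mollify $\fii$ at all: instead it builds, for each boundary point $\xi\in\Omf$, a barrier
\[
v_\xi(z)=h_\xi(z)+K_1|z-z_0|^2-K_2,
\]
where $h_\xi$ is essentially $\chi\circ g$ with $g(z)=B\rho(z)-|z-\xi|^2$ and $\chi(x)=-\bar\omega_\fii((-x)^{1/2})$; the quadratic term supplies $\D v_\xi\geq f^{1/n}$. The point is that the square-root scaling is baked directly into the convex increasing function $\chi$, so each $v_\xi$ already satisfies the \emph{uniform} modulus bound $\omega_{v_\xi}(t)\leq\lambda(1+\|f\|^{1/n}_\infty)\max\{\omega_\fii(t^{1/2}),t^{1/2}\}$. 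Then $v=\sup_{\xi\in\Omf}v_\xi$ inherits the same modulus for free, since a supremum of an equicontinuous family shares the common modulus.

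Your construction indexes the barriers by the regularization scale $\delta$ rather than by boundary points. The price is that the family $\{v_\delta\}$ is \emph{not} equicontinuous (the coefficient $A_\delta\sim\bar\omega_\fii(\delta)/\delta^2$ blows up as $\delta\to0$), so you cannot read off the modulus of the supremum directly; you need the scale-selection lemma restricting the effective supremum to $\delta\gtrsim\sqrt{|\rho(z)|}$ and the interior/boundary-layer dichotomy. This is where all the work in your argument lies, and it is a real extra step compared to the paper. On the other hand, your approach uses only standard mollification and avoids the somewhat ad hoc local-to-global gluing (the max with a constant $\gamma_2$ on $\Omb\setminus B(\xi,r_1)$) that the paper needs to make $h_\xi$ globally defined. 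Both routes rely on the Lipschitz continuity of $\rho$ and on the concavity properties of $\bar\omega_\fii$ (in your case, that $\bar\omega_\fii(\delta)/\delta$ and hence $\bar\omega_\fii(\delta)/\delta^2$ are decreasing, and subadditivity) in essentially the same way.

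One small point to tighten: in the interior case you bound $v(z)-v(z')$ by choosing $\delta^*$ near-optimal at $z$, but for $v(z')-v(z)$ you must choose $\delta^*$ near-optimal at $z'$, which requires $|\rho(z')|\gtrsim t$ as well. Your Lipschitz coupling gives $|\rho(z')|\geq(C_0-L_\rho)t$, so take $C_0>L_\rho$; you have this implicitly but it is worth making explicit.
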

\noindent
Observe that we do not assume any smoothness on $ \Omf$.
\begin{proof}
First of all, let us fix  $ \xi  \in  \Omf  $ , we claim that there exists  $ v_\xi \in \fami  $  such that $ v_\xi(\xi)=\fii(\xi) $. 
It is sufficient to prove that there exists a  constant $ C>0$  depending  on $\Om $  such that for every point $ \xi \in \Omf $  and $ \fii \in \Cc(\Omf) $, there is a function $ h_{\xi} \in PSH(\Om) \cap \Cc(\Omb) $ such that\\
1) $ h_\xi(z) \leq \fii(z)  , \forall z \in \Omf $ \\
2) $h_\xi(\xi)= \fii(\xi) $\\
3) $ \omega_{h_\xi } (t) \leq C \omega_{\fii} (t^{1/2})$.\\
Assume  this is true, we fix $z_0 \in \Om$ and choose $K_1:=\sup_{\Omb} f^{1/n}  \geq 0 $, hence 
$$ \D(K_1 | z- z_0 |^2 ) = K_1 \D | z - z_0 |^2 \geq f^{1/n},  \; \; \forall H \in H_n^+ , det H=n^{-n} , $$
 we also put  $ K_2=K_1 | \xi -z_0 |^2$.  Then for the continuous function 
  $$ \tilde{\fii}(z):= \fii(z) - K_1 | z - z_0 |^2 + K_2 , $$
 we have  $ h_\xi$ such that 1),2)and  3)  hold.

\noindent
Then the desired function   $ v_\xi \in \fami  $ is given by  
$$ v_\xi(z) = h_\xi(z) + K_1 | z - z_0 |^2 - K_2 $$ 
Because,  $ h_\xi(z) \leq \tilde{\fii}(z)= \fii(z) -K_1 | z - z_0 |^2 + K_2 $ on  $ \Omf $,
 so  $ v_\xi(z) \leq \fii $ on  $ \Omf$ and  $ v_\xi(\xi)= \fii(\xi) $.
 \\
Moreover, it is clear that  
$$ \D v_\xi = \D h_\xi + K_1 \D( | z - z_0 |^2) \geq f^{1/n}, \; \; \forall H \in H_n^+ , det H=n^{-n}. $$ 
Furthermore, using the hypothesis of $h_\xi$, we can control the modulus of continuity of $v_\xi$ 
 \begin{center}
 $\begin{array}{ll}
 \omega_{v_\xi} (t)= \sup \limits_{|z-y| \leq t } |v_\xi (z)-v_\xi (y)|  & \leq \omega_{h_\xi} (t) + K_1 \omega_{|z-z_0|^2} (t)\\
   & \leq C \omega_{\tilde{\fii}}(t^{1/2}) + 4 d^{3/2} K_1 t^{1/2} \\
  & \leq  C \omega_\fii (t^{1/2}) +2d K_1(C+2d^{1/2}) t^{1/2} \\
  & \leq (C+2d^{1/2})(1+2dK_1)\max \{\omega_\fii(t^{1/2}), t^{1/2} \} .\\
\end{array} $
 \end{center}
Hence, we conclude that 
 $$  \omega_{v_\xi} (t) \leq \lambda (1+   K_1) \max \{ \omega_\fii(t^{1/2}) , t^{1/2} \}$$
 where  $\lambda := (C+2d^{1/2})(1+2d)$ is a positive constant depending on $ \Om$. 
\\
Now we will construct   $h_\xi \in PSH(\Om) \cap \Cc(\Omb)$ which satisfies the three conditions above.\\
Let  $B>0$ large enough such that the function 
$$ g(z)=  B \rho (z) - | z-\xi |^2 $$
 is psh in $\Om$.
Let $ \bar{\omega}_\fii $  be the minimal concave majorant of $ \omega_{\fii}$ and  define
$$ \chi (x) = - \bar{\omega}_\fii ((-x)^{1/2}) $$
which is convex nondecreasing function on $[-d^2,0]$.
 Now fix $ r>0$ so small that $ |g(z)| \leq d^2$ in $B(\xi,r)\cap  \Om$ and define for $ z \in B(\xi,r) \cap \Omb $ the function
$$ h(z) = \chi \circ g(z) + \fii(\xi). $$
It is clear that $h$ is continuous psh function on $ B(\xi,r) \cap \Om $ and we see that $ h(z) \leq \fii (z) $ if $ z \in B(\xi ,r) \cap \Omf $ and $ h(\xi ) = \fii (\xi) $.
Moreover by the subadditivity of $ \bar{\omega}_{\fii} $ and Lemma \ref{mod} we have
\begin{center}
$  \begin{array}{ll}
       \omega_h(t)& = \sup\limits_{ | z-y| \leq t } | h(z) - h(y) | \\
       & \leq \sup\limits_{ | z-y|  \leq t } \bar{\omega}_{\fii} \left[  \left| | z- \xi |^2 - | y- 
       \xi|^2 -  B(\rho(z) - \rho(y) ) \right| ^{1/2} \right] \\
       & \leq  \sup\limits_{ | z-y| \leq t } \bar{\omega}_{\fii} \left[ \left( |z-y| (2d+ B_1 ) 
       \right)^{1/2}  \right] \\
       & \leq  C . \omega_\fii (t^{1/2})  \\
  \end{array}
$
\end{center}
where $ C:= 1+ (2d+B_1)^{1/2}$ depends on $ \Om$.
\\
Recall that $ \xi \in \Omf$ and 
fix $ 0 < r_1 < r$ and $ \gamma_1 \geq d/r_1 $ such that 
$$ - \gamma_1 \bar{\omega}_\fii \left[ ( | z- \xi |^2 - B \rho (z) )^{1/2} \right] \leq \inf_{\Omf} \fii - \sup_{\Omf} \fii, $$
 for $ z \in \Omf \cap \partial B(\xi , r_1) $. Set $ \gamma_2 = \inf\limits_{\Omf} \fii  $, then it follows that 
 $$ \gamma_1 (h(z)-\fii(\xi)) + \fii(\xi) \leq \gamma_2 \text{  for  }  z \in \partial B(\xi, r_1) \cap \Omb .$$
Now let us put 
 $$ h_\xi(z)=\left\{ \begin{array}{ll}
                   max[\gamma_1 ( h(z)-\fii(\xi)) + \fii(\xi), \gamma_2 ] & ;z \in \Omb \cap   
                   B(\xi,r_1)  \\
                    \gamma_2 & ; z  \in \Omb \setminus B(\xi,r_1)  \\
                   \end{array} \right. $$  
which is well defined plurisubharmonic function on $\Om$, continuous on $\Omb$  and satisfies that $ h_\xi (z) \leq \fii(z)  $ for all $ z \in \Omf$. Indeed, on $\Omf \cap B(\xi,r_1)$ we have 
$$ \gamma_1 (h(z) - \fii(\xi) ) + \fii(\xi) = - \gamma_1 \bar{\omega}_\fii ( | z- \xi | ) + \fii(\xi) \leq - \bar{\omega}_\fii ( | z- \xi | ) + \fii(\xi) \leq \fii(z). $$ 
Hence it is clear that $ h_\xi$ satisfies the three conditions above.
\\
We have just proved that for each $\xi \in \Omf $, there is a function 
$$ v_\xi \in \fami, \; v_\xi(\xi)=\fii(\xi)  \text{ and } \omega_{ v_\xi} (t) \leq \lambda (1+   K_1) \max \{ \omega_\fii(t^{1/2}) , t^{1/2 } \}.  $$
Let us set 
  $$ v(z) = sup \left\{ v_\xi(z) ; \xi \in \Omf \right\}. $$
We can note $ 0 \leq \omega_v (t) \leq \lambda (1+   K_1) \max \{ \omega_\fii(t^{1/2}), t^{1/2} \}  $, then $ \omega_v(t) $ converges to zero when $t$ converges to zero. Consequently, we get $v \in \Cc(\Omb) $ and $ v=v^* \in PSH(\Om) $. Thanks to Choquet lemma, we can choose a nondecreasing sequence $(v_j)$, where $v_j \in \fami$, converging to $ v$ almost everywhere. This implies that 
$$\D v = \lim\limits_{j \to \infty} \D v_j \geq f^{1/n} , \forall H \in H_n^+ , detH = n^{-n}. $$
It is clear that   $ v(\xi) = \fii(\xi) $ for any $  \xi \in \Omf $. Finally, we get 
  $ v \in \fami $ ,  $ v = \fii $ on  $\Omf$ and 
  $ \omega_v (t) \leq \lambda (1+K_1)  \max \{ \omega_\fii(t^{1/2}) , t^{1/2} \}  $.
\end{proof}

\begin{Remark} If we assume that $\Omega$  has a smooth boundary and $\fii$ is  $C^{1,1}$-smooth, then it is possible to  construct a Lipschitz barrier $v$ to the Dirichlet problem $ Dir(\Om, \fii,f)$ (see Theorem 6.2 in \cite{BT76}).

\end{Remark}
\begin{corollary}\label{sups}
Under the same assumption of Proposition \ref{subs}. There exists a plurisuperharmonic function  $ \tilde{v} \in   \Cc(\Omb) $ such that $ \tilde{v} = \fii $ on $ \Omf $ and 
$$\omega_{\tilde{v}} (t) \leq \lambda (1+ \|f \|^{1/n}_{L^\infty(\Omb)})  \max \{ \omega_\fii (t^{1/2} ), t^{1/2} \}  ,$$
where $\lambda >0$ depends on $\Om$.
\end{corollary}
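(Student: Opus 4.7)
The plan is to reduce the corollary to Proposition \ref{subs} by a sign change on the boundary data. Since the target $\tilde v$ must be plurisuperharmonic and equal to $\fii$ on $\Omf$, the natural candidate is $\tilde v := -v$, where $v$ is a subsolution provided by Proposition \ref{subs} for the boundary datum $-\fii$ (and the same right-hand side $f$).

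Concretely, first I would observe that a function and its negative share the same modulus of continuity, so $\omega_{-\fii}=\omega_\fii$. Applying Proposition \ref{subs} to the continuous datum $-\fii\in \Cc(\Omf)$ and to $0\le f\in \Cc(\Omb)$, I obtain a function
$$ v \in \mathcal{V}(\Om,-\fii,f) \quad\text{with}\quad v=-\fii \text{ on } \Omf $$
satisfying
$$ \omega_v(t) \le \lambda\bigl(1+\|f\|^{1/n}_{L^\infty(\Omb)}\bigr)\max\{\omega_{-\fii}(t^{1/2}),\,t^{1/2}\}
   = \lambda\bigl(1+\|f\|^{1/n}_{L^\infty(\Omb)}\bigr)\max\{\omega_{\fii}(t^{1/2}),\,t^{1/2}\}, $$
with $\lambda>0$ depending only on $\Om$.

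Then I would set $\tilde v := -v$. Because $v\in PSH(\Om)\cap \Cc(\Omb)$, the function $\tilde v$ is plurisuperharmonic on $\Om$ and continuous on $\Omb$, and on the boundary $\tilde v = -v = \fii$. Finally, the modulus of continuity is preserved under a sign flip, so $\omega_{\tilde v}=\omega_v$ and the claimed estimate is immediate from the bound above. I do not foresee any real obstacle here, as all the analytic work—the construction of the local barrier $h_\xi$, the concave majorant trick, and the sup-of-subsolutions argument—was already carried out in the proof of Proposition \ref{subs}; the corollary is essentially a symmetric reformulation, requiring only that one verify that the $\Delta_H$-condition on $v$, which is irrelevant for the statement of the corollary, is harmlessly discarded when passing to $\tilde v$.
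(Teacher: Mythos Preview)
Your proposal is correct and matches the paper's proof essentially verbatim: apply Proposition~\ref{subs} to $-\fii$ (and the same $f$) to get $v\in\mathcal{V}(\Om,-\fii,f)$ with $v=-\fii$ on $\Omf$ and the stated modulus estimate, then set $\tilde v=-v$. The observations $\omega_{-\fii}=\omega_\fii$ and $\omega_{\tilde v}=\omega_v$ are exactly what the paper uses, and your remark that the $\Delta_H$-inequality is discarded when passing to $\tilde v$ is correct and harmless.
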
 

\begin{proof}
We can do the same construction as in the proof of Proposition \ref{subs} for the function $ \fii_1=- \fii \in \Cc(\Omf) $, then  we get $  v_1 \in \mathcal{V}(\Om,\fii_1,f)$  such that $ v_1 = \fii_1 $ on  $\Omf$ and $\omega_{v_1} (t) \leq (1+ \|f \|^{1/n}_{L^\infty(\Omb)})  \max \{ \omega_\fii (t^{1/2} ), t^{1/2} \}   $. Hence, we set 
$ \tilde{v} = - v_1 $ which is a plurisuperharmonic function on $\Om$, continuous on $\Omb$ and satisfies $ \tilde{v} = \fii $ on $ \Omf $ and
 $\omega_{\tilde{v}} (t) \leq \lambda  (1+ \|f \|^{1/n}_{L^\infty(\Omb)}) \max \{ \omega_\fii (t^{1/2} ), t^{1/2} \}   $.
\end{proof}

\subsection{Proof of Theorem A}
Thanks to  Proposition \ref{subs}, we obtain a subsolution    $ v \in \fami  $  , $ v = \fii $ on $\Omf$ and
 $$ \omega_v (t) \leq \lambda (1+\| f\|^{1/n}_{L^{\infty}(\Omb)}) \max \{ \omega_\fii (t^{1/2} ), t^{1/2} \} . $$
Observing  Corollary \ref{sups}, we get $ w \in PSH(\Omega) \cap \Cc(\Omb) $ such that  $ w = -\fii $ on $\Omf $ and 
$$ \omega_w(t) \leq \lambda (1+\| f\|^{1/n}_{L^{\infty}(\Omb)} ) \max \{ \omega_\fii (t^{1/2} ), t^{1/2} \}   $$
where $\lambda >0$ constant.
Applying the Proposition \ref{mix}  we get the wanted result, that is
$$ \omega_\U (t) \leq \eta (1+\| f\|^{1/n}_{L^{\infty}(\Omb)} )  \max \{\omega_\fii(t^{1/2}), \omega_{f^{1/n}}(t) , t^{1/2} \}  $$
where $\eta >0$ depends on $\Omega$.

\begin{corollary}\label{Holder}
Let $\Om$ be a bounded SHL domain in $\C^n$. Let $ \fii \in  \Cc^{0,\alpha}(\Omf) $ and $0 \leq f^{1/n} \in \Cc^{0,\beta}(\Omb) $, $ 0< \alpha, \beta \leq 1 $. Then 
the solution $\U$ to the  Dirichlet problem $Dir(\Om,\fii,f)$ belongs to $ \Cc^{0,\gamma}(\Omb) $ for $ \gamma= \min (\beta, \alpha/2)$. 
\end{corollary}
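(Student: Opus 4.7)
The plan is to derive this as an immediate consequence of Theorem A, which is the main result of this section. Since the boundary datum $\varphi$ is assumed to be $\alpha$-H\"older and $f^{1/n}$ is $\beta$-H\"older on $\bar{\Omega}$, their moduli of continuity satisfy $\omega_\varphi(s) \leq [\varphi]_{\alpha} s^\alpha$ and $\omega_{f^{1/n}}(s) \leq [f^{1/n}]_{\beta}\, s^\beta$ for $s \in (0, \mathrm{diam}(\Omega)]$, where $[\cdot]_\alpha$, $[\cdot]_\beta$ denote the corresponding H\"older seminorms.

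Plugging these estimates into the conclusion of Theorem A yields
\begin{equation*}
\omega_{\mathcal{U}}(t) \leq \eta\bigl(1 + \|f\|^{1/n}_{L^\infty(\bar{\Omega})}\bigr) \max\bigl\{ [\varphi]_\alpha\, t^{\alpha/2},\; [f^{1/n}]_\beta\, t^\beta,\; t^{1/2} \bigr\}
\end{equation*}
for $t$ in a bounded interval. The next step is to compare the three exponents $\alpha/2$, $\beta$ and $1/2$. Since $0 < \alpha \leq 1$, we have $\alpha/2 \leq 1/2$, so for $t \in (0,1]$ the term $t^{1/2}$ is dominated by $t^{\alpha/2}$ and can be absorbed into the constant. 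Therefore the right-hand side is controlled by a constant multiple of $\max\{ t^{\alpha/2}, t^{\beta}\} \leq t^{\min(\beta,\alpha/2)}$ for $t \in (0,1]$.

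Setting $\gamma := \min(\beta, \alpha/2)$, one obtains $\omega_{\mathcal{U}}(t) \leq C t^{\gamma}$ for all small $t>0$, where $C$ depends on $\Omega$, $\|f\|_{L^\infty(\bar{\Omega})}$, and the H\"older seminorms of $\varphi$ and $f^{1/n}$. Since $\mathcal{U}$ is continuous on the compact set $\bar{\Omega}$, the estimate extends (with a possibly larger constant) to all $t \in [0,\mathrm{diam}(\Omega)]$, giving $\mathcal{U} \in \mathcal{C}^{0,\gamma}(\bar{\Omega})$. There is no real obstacle here: once Theorem A is in hand, the corollary is a matter of bookkeeping the exponents and observing that $\alpha \leq 1$ forces $\alpha/2 \leq 1/2$, which is precisely why the auxiliary $t^{1/2}$ factor never becomes the binding constraint.
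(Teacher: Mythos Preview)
Your proof is correct and follows exactly the approach intended by the paper: the corollary is stated there without proof, as it is an immediate consequence of Theorem~A obtained by inserting the H\"older bounds $\omega_\fii(s)\le C s^\alpha$ and $\omega_{f^{1/n}}(s)\le C s^\beta$ and noting that $\alpha/2\le 1/2$ makes the $t^{1/2}$ term redundant. There is nothing to add.
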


\noindent
The following example illustrates that the estimate of $\omega_\U$ in Theorem A is optimal.

\begin{example}
Let $ \psi$ be a concave modulus of continuity on $[0,1]$ and
 $$ \fii(z) = -\psi[\sqrt{(1+Rez_1)/2}], \text{ for  }  z=(z_1,z_2,..., z_n) \in \partial\B \subset \C^n. $$
It is easy to show that $ \fii \in \Cc(\partial \B)$ with modulus of continuity 
$$ \omega_{\fii}(t) \leq C \psi(t)$$
for some $C>0$.
\\
Let $ v(z)= -(1+Rez_1)/2 \in PSH(\B) \cap \Cc(\bar{ \B})$ and
 $ \chi(\lambda)= - \psi(\sqrt{-\lambda})$ is convex increasing function on $ [-1,0]$.  Hence we get that
 $$ u(z)= \chi \circ v(z) \in PSH(\B)\cap \Cc(\bar{ \B})$$
 and satisfies $ (dd^c u)^n =0$ in $ \B$ and $ u= \fii $ on $ \partial \B$.
The modulus of continuity of $ \U$, $ \omega_\U(t)$, has the estimate
$$ C_1 \psi(t^{1/2}) \leq \omega_\U(t) \leq C_2  \psi(t^{1/2})$$
for $ C_1,C_2>0$.
\\
Indeed, let $ z_0=(-1,0,...,0)$ and $ z=(z_1,0,...,0) \in \B $ where $ z_1= -1+2t$ and $0 \leq t\leq 1$. Hence, by Lemma \ref{mod}, we see that 
$$ \psi(t^{1/2})= \psi [ \sqrt{|z-z_0|/2} ] = \psi[\sqrt{(1+Rez_1)/2}] = |\U(z) - \U(z_0)| \leq  \omega_\U( 2t) \leq 3 \omega_\U(t).  $$ 
\end{example}
\begin{definition}
Let $ \psi $ be a modulus of continuity,  $ E \subset \C^n $ be a bounded set and $ g \in \Cc\cap L^\infty(E)$. We define the norm of $ g$ with respect to $ \psi$ ( $\psi$-norm) as follows
$$ \| g \|_{\psi} := \sup\limits_{z \in E} |g(z)| + \sup\limits_{ z \neq y \in E } \frac{|g(z)-g(y)|}{\psi(|z-y|)}$$
\end{definition}

\begin{proposition}
Let $\Omega \subset \C^n$ be a bounded SHL domain, $ \fii \in \Cc(\Omf)$ with modulus of continuity $ \psi_1$ and $ f^{1/n} \in \Cc(\Omb)$ with modulus of continuity $ \psi_2$. Then there exists a constant $C>0$ depending on $\Om$ such that 
$$ \| \U \|_{\psi} \leq C (1+ \| f\|^{1/n}_{L^{\infty}(\Omb)}) \max \{ \| \fii \|_{\psi_1} , \| f^{1/n} \|_{\psi_2} \}$$
where $ \psi(t) = \max\{\psi_1(t^{1/2}), \psi_2(t)\}$.
\end{proposition}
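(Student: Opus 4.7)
The statement is essentially a repackaging of Theorem~A in terms of the $\psi$-norm, so I would proceed in three steps.

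First, I bound the $L^\infty$-contribution. Applying the stability estimate~(\ref{stab}) with $\fii_2\equiv 0$ and $f_2\equiv 0$ (so that $\U_2\equiv 0$) gives
$$\|\U\|_{L^\infty(\Omb)}\leq d^2\|f\|^{1/n}_{L^\infty(\Omb)}+\|\fii\|_{L^\infty(\Omf)},\qquad d:=\mathrm{diam}(\Om).$$
Since $\|\fii\|_{L^\infty}\leq\|\fii\|_{\psi_1}$ and $\|f^{1/n}\|_{L^\infty}\leq\|f^{1/n}\|_{\psi_2}$ by the very definition of the $\psi_i$-norms, this controls the $L^\infty$-part of $\|\U\|_\psi$ by $(d^2+1)\max\{\|\fii\|_{\psi_1},\|f^{1/n}\|_{\psi_2}\}$.

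Next, for the H\"older-type seminorm part, I apply Theorem~A pointwise: for every $z\neq y$ in $\Omb$,
$$|\U(z)-\U(y)|\leq\eta(1+\|f\|^{1/n}_{L^\infty(\Omb)})\max\{\omega_\fii(|z-y|^{1/2}),\,\omega_{f^{1/n}}(|z-y|),\,|z-y|^{1/2}\}.$$
By definition of the $\psi_i$-norms one has $\omega_\fii(s)\leq\|\fii\|_{\psi_1}\psi_1(s)$ and $\omega_{f^{1/n}}(s)\leq\|f^{1/n}\|_{\psi_2}\psi_2(s)$, while $\psi(t)\geq\psi_1(t^{1/2})$ and $\psi(t)\geq\psi_2(t)$ by definition of $\psi$. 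Dividing through by $\psi(|z-y|)$, the first two summands in the maximum then contribute at most $\|\fii\|_{\psi_1}$ and $\|f^{1/n}\|_{\psi_2}$ respectively.

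The delicate step I expect to encounter is absorbing the residual $|z-y|^{1/2}$ coming from the barrier construction of Theorem~A. For this I would pass to the minimal concave majorant $\bar\psi_1$ of $\psi_1$ (losing only a factor $2$ by Lemma~\ref{mod}) and exploit that a concave function on $[0,d^{1/2}]$ vanishing at $0$ has nonincreasing secant slope, so $\bar\psi_1(s)\geq(\bar\psi_1(d^{1/2})/d^{1/2})\,s$ on $(0,d^{1/2}]$. Applied at $s=|z-y|^{1/2}$ this yields $|z-y|^{1/2}\leq C\psi_1(|z-y|^{1/2})\leq C\psi(|z-y|)$, where $C$ depends on $\Om$ (and mildly on $\psi_1$ through the lower bound $\bar\psi_1(d^{1/2})>0$, which in this framework is tacitly part of the data). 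Combining the three pieces yields $\|\U\|_\psi\leq C(\Om)(1+\|f\|^{1/n}_{L^\infty(\Omb)})\max\{\|\fii\|_{\psi_1},\|f^{1/n}\|_{\psi_2}\}$, as required.
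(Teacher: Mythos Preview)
Your approach is essentially the same as the paper's: use the stability estimate~(\ref{stab}) for the $L^\infty$ part and Theorem~A for the seminorm part, then bound $\omega_\fii$ and $\omega_{f^{1/n}}$ by the corresponding $\psi_i$-norms. You are in fact more careful than the paper, which when invoking Theorem~A in this proof silently writes only $\max\{\omega_\fii(|z-y|^{1/2}),\omega_{f^{1/n}}(|z-y|)\}$ and drops the $t^{1/2}$ term; your concavity argument absorbing $|z-y|^{1/2}$ into $\psi_1(|z-y|^{1/2})$ is the natural way to fill that gap, and the honest caveat about the resulting dependence on $\bar\psi_1(d^{1/2})$ is a genuine point the paper does not address.
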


\begin{proof}
By hypothesis, we see that $ \|\fii \|_{\psi_1} < \infty $ and  $ \| f^{1/n} \|_{\psi_2} < \infty $.
Let $ z \neq y \in \Omb$, by Theorem A, we get 

\begin{tabular}{ll}
$|\U(z)-\U(y)|$ &  $ \leq \eta (1+ \| f\|^{1/n}_{L^{\infty}(\Omb)}) \max \{ \omega_\fii (|z-y|   
                   ^{1/2} ) , \omega_{f^{1/n}}( |z-y|)\} $ \\
                & $ \leq \eta    (1+ \| f\|^{1/n}_{L^{\infty}(\Omb)}) \max \{ \| \fii \|_{\psi_1}  
                  \psi_1(|z-y|^{1/2}), \| f^{1/n} \|_{\psi_2} \psi_2(|z-y|) \}$ \\
                & $ \leq   \eta    (1+ \| f\|^{1/n}_{L^{\infty}(\Omb)}) \max \{\| \fii \|
                _{\psi_1}, \| f^{1/n} \|_{\psi_2} \} \psi(|z-y|)$ \\
\end{tabular}

\noindent
where $ \psi(|z-y|)= \max \{\psi_1(|z-y|^{1/2}), \psi_2(|z-y|) \} $.
\\
Hence we have 
$$ \sup\limits_{z\neq y \in \Omb} \frac{|\U(z)-\U(y)|}{\psi(|z-y|)} \leq  \eta    (1+ \| f\|^{1/n}_{L^{\infty}(\Omb)}) \max \{\| \fii \|_{\psi_1}, \| f^{1/n} \|_{\psi_2} \}$$
where $ \eta \geq d^2 +1$ and $ d= diam(\Om)$ (see Proposition \ref{mix}).
From Remark \ref{stab}, we note that 
$$\| \U\|_{L^{\infty} (\Omb) } \leq d^2 \| f\|^{1/n}_{L^\infty(\Omb)} + \| \fii \| _{L^\infty(\Omf)}  \leq  \eta \max \{ \| \fii \|_{\psi_1}, \| f^{1/n} \|_{\psi_2} \}$$
Then we can conclude that 
$$  \| \U \|_{\psi} \leq 2 \eta (1+ \| f\|^{1/n}_{L^{\infty}(\Omb)}) \max \{ \| \fii \|_{\psi_1} , \| f^{1/n} \|_{\psi_2} \}.$$

\end{proof}

Finally, it is natural to try to relate the modulus of continuity of $ \U:= \U(\Om,\fii,f)$ to the modulus of continuity of $\U_0:= \U(\Om,\fii,0)$ the solution to Bremermann problem in a bounded SHL domain.

\begin{proposition}
Let $\Om$ be a bounded SHL domain in $\C^n$, $ f \in \Cc(\Omb)$ and $ \fii \in \Cc(\Omf)$. Then there exists a positive constant $C = C(\Om)$ such that 
$$ \omega_\U(t) \leq C (1+ \|f\|^{1/n}_{L^\infty(\Omb)} ) \max \{ \omega_{\U_0}(t), \omega_{f^{1/n}}(t) \}. $$
\end{proposition}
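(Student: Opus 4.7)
The plan is to apply Proposition \ref{mix} with barriers built directly from the Bremermann envelope $\U_0=\U(\Om,\fii,0)$ and its counterpart $\U_0^-:=\U(\Om,-\fii,0)$. Each of these already has the correct boundary trace ($\fii$ and $-\fii$ respectively) and is PSH on $\Om$, so I only need to upgrade $\U_0$ to a genuine subsolution for the density $f$; this is done by adding a small multiple of the defining function $\rho$ supplied by the SHL structure, whereas $\U_0^-$ itself serves as the PSH companion playing the role of $w$ in Proposition \ref{mix}.

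Concretely, set $K:=\|f^{1/n}\|_{L^\infty(\Omb)}$ and $A:=1/c$, where $c>0$ is the constant from Definition \ref{lip}(2). Since $dd^c\rho\geq c\beta$, Lemma \ref{det} yields $\D\rho\geq c$ for every $H\in H_n^+$ with $\det H=n^{-n}$. I then take
$$ v:=\U_0+KA\rho,\qquad w:=\U_0^-. $$
Both functions lie in $PSH(\Om)\cap\Cc(\Omb)$; since $\rho\equiv 0$ on $\Omf$ we have $v=\fii$ and $w=-\fii$ there; and $\D v\geq \D \U_0+KAc\geq K\geq f^{1/n}$ for every admissible $H$, so $v\in\fami$. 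A direct estimate gives
$$ \omega_v(t)\leq \omega_{\U_0}(t)+KAL_\rho t,\qquad \omega_w(t)=\omega_{\U_0^-}(t), $$
where $L_\rho$ denotes the Lipschitz constant of $\rho$.

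Plugging this pair of barriers into Proposition \ref{mix} produces
$$ \omega_\U(t)\leq C_1(1+K)\max\bigl\{\omega_{\U_0}(t),\omega_{\U_0^-}(t),\omega_{f^{1/n}}(t),t\bigr\}. $$
To reach the form claimed in the proposition I then invoke the symmetry $\fii\mapsto-\fii$ of the Perron--Bremermann construction: the barrier procedure of Proposition \ref{subs} is insensitive to the sign of the boundary data, so $\omega_{\U_0^-}$ satisfies the same estimate as $\omega_{\U_0}$ up to a multiplicative constant depending only on $\Om$, and can be absorbed into the right-hand side; the residual linear term $t$ is likewise absorbed into $\omega_{\U_0}(t)$ using the Lipschitz-type baseline regularity that any Bremermann envelope inherits from the SHL sub-barrier.

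The main obstacle I anticipate is the comparison $\omega_{\U_0^-}(t)\leq C(\Om)\,\omega_{\U_0}(t)$, since a priori these are moduli of two distinct upper envelopes with boundary data of opposite sign. I expect to handle it by exploiting the duality of the Perron construction rather than re-deriving the estimate through Theorem A; a secondary technical point is the absorption of the Lipschitz remainder $KAL_\rho t$ into the final bound, which amounts to checking that the linear correction introduced by $\rho$ is never strictly worse than the modulus of $\U_0$ itself on an SHL domain.
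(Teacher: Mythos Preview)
Your sub-barrier $v=\U_0+KA\rho$ coincides with the paper's choice, so that half is fine. The genuine gap is in your super-barrier. You take $w=\U_0^-=\U(\Om,-\fii,0)$ and are then forced to prove
\[
\omega_{\U_0^-}(t)\leq C(\Om)\,\omega_{\U_0}(t),
\]
which is false in general. The ``duality'' you appeal to only says that the barrier machinery of Proposition~\ref{subs} bounds \emph{both} $\omega_{\U_0}$ and $\omega_{\U_0^-}$ by the same quantity $\max\{\omega_\fii(t^{1/2}),t^{1/2}\}$; it says nothing about one envelope dominating the other. Concretely, if $\fii$ itself extends to a Lipschitz psh function on $\Omb$ then $\U_0$ is Lipschitz, while $-\fii$ need not admit any such extension, so $\U_0^-$ may only be $C^{0,1/2}$; the ratio $\omega_{\U_0^-}(t)/\omega_{\U_0}(t)$ then blows up as $t\to0$. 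Your absorption of the linear term $t$ has the same defect: there is no ``Lipschitz-type baseline'' lower bound for $\omega_{\U_0}$ (take $\fii$ constant).

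The paper sidesteps all of this with one observation you are missing: $\U_0$ is \emph{itself} a super-barrier for $\U$. Indeed $(dd^c\U_0)^n=0\leq f\beta^n=(dd^c\U)^n$ and $\U=\U_0=\fii$ on $\Omf$, so the comparison principle gives $\U\leq\U_0$ on $\Omb$. Thus the sandwich needed in the proof of Proposition~\ref{mix} is
\[
v\leq \U\leq \U_0 \quad\text{in }\Om,\qquad v=\U=\U_0=\fii \quad\text{on }\Omf,
\]
and one obtains directly
\[
\omega_\U(t)\leq \lambda\max\{\omega_v(t),\omega_{\U_0}(t),\omega_{f^{1/n}}(t)\},
\]
with no $\omega_{\U_0^-}$ appearing at all. (Note that Proposition~\ref{mix} is being used through its proof rather than its literal hypothesis $w\in SH(\Om)$: what is actually needed is the sandwich with equality on the boundary, which $\U_0$ provides.) Replacing your $w=\U_0^-$ by $-w=\U_0$ closes the gap.
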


\begin{proof}
First, we search a subsolution  $ v \in \fami $ such that $ v|_{\Omf} = \fii$ and estimate its modulus of continuity. 
Since $ \Om$ is bounded  SHL  domain, there exists a  Lipschitz defining function $\rho$ on $\Omb$.
Let us  define the function
 $$ v(z)= \U_0(z) + A \rho(z) $$
where $A := \|f\|^{1/n}_{L^\infty} / c  $ and $ c>0$ as in the Definition \ref{lip}. It is clear that  $ v \in \fami$, $ v= \fii$ on $\Omf$ and   $ \omega_v(t) \leq \tilde{C} \omega_{\U_0}(t) $ where $\tilde{C} := \gamma (1+ \|f\|^{1/n}_{L^\infty(\Omb)} )$ and $\gamma \geq 1 $ depends on $ \Om$.
On the other hand, by the comparison principle we get that $ \U \leq \U_0$. Hence 
$$ v \leq \U \leq \U_0 \text{  in } \Om  \text{  and  } v=\U = \U_0 = \fii \text{  on } \Omf .$$
Thanks to Proposition \ref{mix}, there exists $ \lambda >0$ depending on $ \Om$ such that
$$ \omega_\U(t) \leq \lambda \max \{ \omega_v (t), \omega_{\U_0} (t), \omega_{f^{1/n}}(t)\}.$$
Hence, the following inequality holds for some $ C>0$ depending on $\Om$ 
$$ \omega_\U(t) \leq C (1+ \|f\|^{1/n}_{L^\infty(\Omb)} ) \max \{ \omega_{\U_0} (t),\omega_{f^{1/n}}(t) \}.$$
\end{proof}

\section{ H\"older continuous solutions for the Dirichlet problem with $L^p$ density}\label{section Lp}

In this section we will prove the existence and the H\"older continuity of the solution to Dirichlet problem $Dir(\Om,\fii,f)$ when $ f\in L^p(\Om)$, $p>1$ in a bounded SHL domain.

It is well known in   \cite{Ko98} that there exists a weak continuous solution to this  problem when $\Om$ is a bounded strongly pseudoconvex domain with smooth boundary.

The H\"older continuity of this solution was studied in \cite{GKZ08} under some additional conditions on the density and on the boundary data, that is when $f$ is bounded near the boundary and $ \fii \in \Cc^{1,1}(\Omf)$.

An essential method in this study is played by an a priori weak stability estimate of the solution which is still true when $\Om$ is a bounded SHL domain. More precisely, we have the following theorem

\begin{theorem}(\cite{GKZ08}).\label{stability Lp}
Fix $ 0 \leq f \in L^p(\Om)$, $p>1$. Let $ u, v$ be two bounded plurisubharmonic functions in $\Om$ such that $ (dd^c u)^n=f \beta^n$ in $\Om$ and let $ u \geq v$ on $\Omf$. Fix $ r \geq 1$ and $ 0 \leq \gamma < r/(nq+r)$, $1/p+1/q=1$. Then there exists a uniform constant $C= C(\gamma,n,q) >0$ such that 
$$ \sup_{\Om} (v - u) \leq C (1+ \| f\|^\tau_{L^p(\Om)}) \| (v-u)_+   \|_{L^r(\Om)}^\gamma $$
where $ \tau := \frac{1}{n}+ \frac{\gamma q}{r-\gamma(r+nq)}$ and $ (v - u)_+ := \max (v-u, 0)$.
\end{theorem}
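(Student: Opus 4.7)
The plan is to establish a quantitative comparison between $S := \sup_{\Om}(v - u)$ and $T := \|(v-u)_+\|_{L^r(\Om)}$, by combining the comparison principle with the $L^p$ integrability of $f$ and a Ko\l odziej-type capacity argument.

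One may assume $S > 0$, else the bound is trivial (the additive $1$ in $(1 + \|f\|^\tau_{L^p})$ handles the degenerate case). For $0 < s < S$, set $U(s) := \{v > u + s\}$, which is relatively compact in $\Om$ since $u \geq v$ on $\Omf$. Chebyshev's inequality yields $|U(s)| \leq s^{-r}T^r$, while H\"older applied to $(dd^c u)^n = f \beta^n$ with conjugate exponents $(p, q)$ gives
$$ \int_{U(s)}(dd^c u)^n \leq \|f\|_{L^p(\Om)}\, |U(s)|^{1/q} \leq \|f\|_{L^p(\Om)}\, s^{-r/q}\, T^{r/q}. \quad (\ast) $$

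The heart of the argument is a Ko\l odziej/Cegrell-type inequality translating the vertical jump $S - s$ into a lower bound on the Monge-Amp\`ere mass of $u$ on $U(s)$: applying the comparison principle to $u$ and $v - s$ on the relatively compact open set $U(s)$ (where $v - s \geq u$ with equality on $\partial U(s)$) and testing against a bounded auxiliary psh function on $\Om$, for instance $\rho(z) = A(|z - z_0|^2 - d^2)$ suitably normalized so that $(dd^c \rho)^n \geq c(\Om)\beta^n$, one obtains an estimate of the shape $(S - s)^n \leq C(\Om)\int_{U(s)}(dd^c u)^n$. Combined with $(\ast)$ and the optimal choice $s = S/2$, this produces the endpoint estimate
$$ S \leq C\, \|f\|_{L^p(\Om)}^{q/(nq+r)}\, T^{r/(nq+r)}, $$
which already achieves the critical exponent $\gamma_0 := r/(nq + r)$ on $T$.

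To recover any subcritical $\gamma < r/(nq+r)$ with the prefactor $(1 + \|f\|^\tau_{L^p})$ and the stated $\tau = \frac{1}{n} + \frac{\gamma q}{r - \gamma(r+nq)}$, I would interpolate this endpoint bound with the unconditional Ko\l odziej $L^\infty$ estimate $S \leq C(1 + \|f\|^{1/n}_{L^p(\Om)})$ (valid in the SHL setting), using a Young-type inequality whose weights depend on $\gamma$. The resulting $\tau(\gamma)$ diverges as $\gamma \to r/(nq+r)^-$, reflecting the fact that the critical exponent cannot be reached within a single-power scaling of $\|f\|_{L^p}$.

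The principal obstacle is the Ko\l odziej comparison inequality $(S-s)^n \leq C(\Om) \int_{U(s)}(dd^c u)^n$ in the merely Lipschitz SHL setting: its standard proof via integration by parts needs care near the boundary, which can be handled by approximating $u$ by continuous Perron-Bremermann envelopes as in Theorem~\ref{con}, or by exhausting $\Om$ by SHL subdomains and passing to the limit. Once this inequality is secured, the exponent bookkeeping producing the exact form of $\tau$ is a routine, if slightly tedious, algebraic step.
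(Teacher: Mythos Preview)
The paper does not supply a proof of this statement; it is quoted verbatim from \cite{GKZ08} and used as a black box. So the relevant comparison is with the original argument of Guedj--Ko\l odziej--Zeriahi.

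Your proposal has a genuine gap at the step you flag as ``the heart of the argument'', namely the claimed inequality
\[
(S-s)^n \;\leq\; C(\Om)\int_{U(s)}(dd^c u)^n .
\]
This does not follow from the comparison principle applied to $u$ and $v-s$ on $U(s)$, nor from testing against an auxiliary $\rho$ with $(dd^c\rho)^n\geq c\,\beta^n$. What such an argument actually yields is the capacity estimate
\[
t^n\,\mathrm{Cap}\bigl(U(s+t),\Om\bigr)\;\leq\;\int_{U(s)}(dd^c u)^n,
\]
i.e.\ control of the \emph{capacity} of a strictly smaller sublevel set, not of the scalar $(S-s)^n$. The set where $v-u$ is near its supremum can have arbitrarily small capacity, so one cannot collapse this to a bound on $S$ in one step. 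Equivalently, your displayed inequality would imply an $L^1\!\to\!L^\infty$ bound for the Monge--Amp\`ere equation (take $s\to 0$ and vary $v$), which is precisely what Ko\l odziej's theory shows fails without an $L^p$, $p>1$, hypothesis.

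The actual proof in \cite{GKZ08} chains the capacity comparison with H\"older's inequality and the volume--capacity estimate from Ko\l odziej's $L^p$ theory to obtain a nonlinear functional inequality for $a(s):=\mathrm{Cap}(U(s),\Om)^{1/n}$ of the form $t\,a(s+t)\leq C\,a(s)^{1+\delta}$ with $\delta=\delta(p,n)>0$, and then runs a De~Giorgi-type iteration to force $a(s_\infty)=0$ for an explicit $s_\infty$ governed by $\|(v-u)_+\|_{L^r}^\gamma$ and $\|f\|_{L^p}^\tau$. Your one-shot argument skips this iteration and with it the mechanism that converts smallness of volume (via capacity) into smallness of $S$. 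The boundary-regularity issue you identify as the ``principal obstacle'' is secondary; the missing ingredient is the capacity iteration.
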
  

It was constructed in \cite{GKZ08} a Lipschitz continuous barrier to the Dirichlet problem when $ \fii \in \Cc^{1,1}(\Omf)$ and $ f $ is bounded near the boundary. Moreover, it was shown in this case that the total mass of $\Delta \U$ is finite in $\Om$. Finally, they conclude that $ \U \in \Cc^{0,\al}(\Omb)$ for any $ \alpha < 2/(nq+1)$.
However, the following theorem summarizes the work introduced in \cite{GKZ08} 

\begin{theorem}(\cite{GKZ08}).\label{Lp}
Let $ 0 \leq f \in L^p(\Om)$, for some $ p>1$ and $ \fii \in  \Cc(\Omf)$. Suppose that there exists $ v,w \in PSH(\Om) \cap \Cc^{0,\al}(\Omb)$ such that $ v \leq \U \leq -w $ on $\Omb$ and $ v =  \fii = -w $ on $\Omf$. If the total mass of $ \Delta \U $ is finite in $\Om$, then $ \U \in \Cc^{0,\alpha'}(\Omb)$ for  $ \alpha' < \min \{ \alpha, 2/(nq+1) \}$.  
\end{theorem}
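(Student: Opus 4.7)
The plan is to run the sup-convolution argument of \cite{GKZ08} using Theorem~\ref{stability Lp} as the main analytic input. First, the hypotheses $v\le\U\le -w$ on $\Omb$ with $v,-w\in\Cc^{0,\alpha}(\Omb)$ and $v=\fii=-w$ on $\Omf$ give the near-boundary H\"older control
$$|\U(z)-\fii(\xi)|\le M|z-\xi|^\alpha,\qquad z\in\Omb,\ \xi\in\Omf,$$
by sandwiching $\U(z)-\fii(\xi)$ between $v(z)-v(\xi)$ and $-w(z)+w(\xi)$, with $M$ depending only on the $\Cc^{0,\alpha}$-norms of $v$ and $w$.

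Next, for small $\delta>0$, I would introduce on $\Om_\delta:=\{z\in\Om:\mathrm{dist}(z,\Omf)>\delta\}$ a regularization $\U_\delta$ of $\U$ in the spirit of \cite{GKZ08}: $\U_\delta$ is plurisubharmonic, dominates $\U$, controls the pointwise oscillation $\U(z+\tau)-\U(z)\le\U_\delta(z)-\U(z)$ for $|\tau|\le\delta$, and (by virtue of the finite total mass of $\Delta\U$) satisfies the interior bound
$$\int_{\Om_\delta}(\U_\delta-\U)\,d\lambda\le C_1\delta^2\|\Delta\U\|(\Om).$$
The boundary estimate of the first step then supplies $A=A(M,\alpha)$ with $\U_\delta-\U\le A\delta^\alpha$ on a one-sided neighbourhood of $\partial\Om_\delta$ from inside, so the gluing
$$\tilde\U_\delta:=\max(\U_\delta-A\delta^\alpha,\,\U)\text{ on }\Om_\delta,\qquad\tilde\U_\delta:=\U\text{ on }\Om\setminus\Om_\delta,$$
defines a plurisubharmonic function on $\Om$, continuous on $\Omb$, with $\tilde\U_\delta=\fii$ on $\Omf$.

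Applying Theorem~\ref{stability Lp} to $u=\U$ and $v=\tilde\U_\delta$ with $r=1$ and $\gamma\in(0,1/(nq+1))$ arbitrarily close to the endpoint yields
$$\sup_\Om(\tilde\U_\delta-\U)\le C\bigl(1+\|f\|_{L^p}^\tau\bigr)\,\|(\tilde\U_\delta-\U)_+\|_{L^1(\Om)}^\gamma.$$
Since $(\tilde\U_\delta-\U)_+\le\U_\delta-\U$ inside $\Om_\delta$ and vanishes elsewhere, combining with the $\delta^2$-interior bound of the previous step produces $\sup_\Om(\tilde\U_\delta-\U)\le C_2\delta^{2\gamma}$, whence $\U_\delta(z)-\U(z)\le A\delta^\alpha+C_2\delta^{2\gamma}$ throughout $\Om_\delta$.

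Finally, given $z,z'\in\Omb$ with $|z-z'|=\delta$: if both lie in $\Om_\delta$, the pointwise-oscillation property of $\U_\delta$ gives $|\U(z')-\U(z)|\le A\delta^\alpha+C_2\delta^{2\gamma}$ directly; otherwise the first step's boundary estimate at a nearest point of $\Omf$, together with the triangle inequality, yields the same control. Letting $\gamma\uparrow 1/(nq+1)$ delivers $\U\in\Cc^{0,\alpha'}(\Omb)$ for every $\alpha'<\min\{\alpha,2/(nq+1)\}$. The hard part is really the construction of $\U_\delta$ with both the pointwise-domination property and the $\delta^2$-interior $L^1$-bound: this is precisely where the hypothesis $\|\Delta\U\|(\Om)<\infty$ is consumed, and it is proved in \cite{GKZ08} by comparison with the spherical average together with a Riesz-potential representation $\hat\U_\delta-\U=G_\delta\ast\Delta\U$ whose kernel is uniformly bounded by $C\delta^2$.
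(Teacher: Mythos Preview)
The paper does not give its own proof of this statement: Theorem~\ref{Lp} is quoted from \cite{GKZ08} and then used as a black box in the proof of Theorem~B. Your outline is a faithful reconstruction of the argument in \cite{GKZ08}---boundary H\"older control from the barriers $v,w$, a plurisubharmonic regularization $\U_\delta$ on $\Om_\delta$ whose $L^1$-excess over $\U$ is $O(\delta^2\|\Delta\U\|_{\Om})$, gluing to a global competitor via the boundary estimate, and then the weak stability Theorem~\ref{stability Lp} with $r=1$---so there is nothing in the present paper to compare your proposal against.
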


Here let $\Om \subset \C^n $ be a bounded SHL domain. Using the stability theorem \ref{stability Lp} we will ensure the existence of the solution to the Dirichlet problem $Dir(\Om,\fii,f)$.

\begin{proposition}
Let $\Om \subset \C^n$ be a bounded SHL domain, $\fii \in \Cc(\Omf)$ and $ f \in L^p(\Om)$ for some $ p>1$. Then there exists a unique solution $\U$ to the Dirichlet problem $ Dir(\Om,\fii,f)$.
\end{proposition}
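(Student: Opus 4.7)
The plan is to obtain existence by approximation from the continuous case handled in Proposition \ref{solution}; uniqueness is immediate from the comparison principle for bounded plurisubharmonic functions (as in the proof of Proposition \ref{solution}).

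First I would choose a sequence $(f_j) \subset \mathcal{C}(\bar\Omega)$ with $f_j \geq 0$ and $f_j \to f$ in $L^p(\Omega)$ (for instance by truncation and mollification of $f$ extended by zero). By Proposition \ref{solution}, for each $j$ the Dirichlet problem $Dir(\Omega,\fii,f_j)$ has a unique solution $\U_j \in PSH(\Omega)\cap \mathcal{C}(\bar\Omega)$ with $(dd^c\U_j)^n = f_j\beta^n$ and $\U_j = \fii$ on $\partial\Omega$. In particular, $\sup_j\|f_j\|_{L^p(\Omega)}=:M<\infty$.

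Next I would show that $(\U_j)$ is a Cauchy sequence in $L^\infty(\bar\Omega)$. The key idea is to bootstrap from $L^n$-convergence (cheap) to $L^\infty$-convergence (what we want) via the weak stability estimate. From the $L^n$--$L^1$ stability in Remark \ref{stab}, using that the boundary data are identically $\fii$, I get
\begin{equation*}
\|\U_j - \U_k\|_{L^n(\Omega)} \leq \tfrac{r_0^2}{4}\,\|f_j - f_k\|_{L^1(\Omega)}^{1/n},
\end{equation*}
so $(\U_j)$ is Cauchy in $L^n(\Omega)$. Now apply Theorem \ref{stability Lp} with $u=\U_j$, $v=\U_k$, $r=n$, and any fixed $0<\gamma < \tfrac{1}{q+1}$; since $\U_j = \U_k$ on $\partial\Omega$, the hypothesis $u\geq v$ on $\partial\Omega$ holds (after swapping the roles symmetrically), and one obtains
\begin{equation*}
\|\U_j - \U_k\|_{L^\infty(\bar\Omega)} \leq C(1+M^\tau)\,\|\U_j-\U_k\|_{L^n(\Omega)}^\gamma.
\end{equation*}
(The $L^\infty$ norm on $\Omega$ extends to $\bar\Omega$ because both functions are continuous on $\bar\Omega$ and equal $\fii$ on $\partial\Omega$.) Hence $(\U_j)$ is Cauchy in $L^\infty(\bar\Omega)$ and converges uniformly to some $\U \in \mathcal{C}(\bar\Omega)$ with $\U=\fii$ on $\partial\Omega$ and $\U\in PSH(\Omega)$.

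Finally I would verify that $\U$ solves the Monge-Amp\`ere equation. By the Bedford-Taylor convergence theorem, uniform convergence $\U_j\to\U$ implies $(dd^c\U_j)^n \to (dd^c\U)^n$ weakly as currents. On the other hand, $f_j\beta^n \to f\beta^n$ weakly since $f_j\to f$ in $L^1(\Omega)$. Therefore $(dd^c\U)^n = f\beta^n$, and $\U$ is the desired solution. The main obstacle is the bootstrap step: the crude $L^n$--$L^1$ estimate is not enough by itself, and without Theorem \ref{stability Lp} one would be stuck with convergence only in $L^n$; the stability theorem is precisely what converts weak convergence of the data in $L^p$ into uniform convergence of the solutions, which in turn is needed both for continuity of the limit and for passing to the limit in the Monge-Amp\`ere operator.
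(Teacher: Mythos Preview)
Your proposal is correct and follows essentially the same approach as the paper: approximate $f$ in $L^p$ by continuous densities, solve each approximate problem via Proposition~\ref{solution}, combine the $L^n$--$L^1$ stability of Remark~\ref{stab} with the weak stability Theorem~\ref{stability Lp} (applied with $r=n$ and any $0<\gamma<1/(q+1)$, in both directions) to upgrade to uniform Cauchy convergence, and pass to the limit in the Monge--Amp\`ere operator. The only cosmetic difference is that the paper records the two-sided bound with the factor $(1+\|f_k\|_{L^p}^\tau)(1+\|f_j\|_{L^p}^\tau)$ before absorbing it into a uniform constant, whereas you use $\sup_j\|f_j\|_{L^p}=M$ directly.
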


\begin{proof}
Let $ (f_j)$ be a sequence of smooth functions on $\Omb$ which converges to $f$ in $ L^p(\Om)$. Thanks to Proposition \ref{solution}, there exists a unique solution $\U_j$ to $ Dir(\Om,\fii,f_j)$ that is 
$ \U_j \in PSH(\Om) \cap \Cc(\Omb)$, $ \U_j = \fii$ on $\Omf$ and $ (dd^c \U_j)^n = f_j \beta^n$ in $\Om$.
We claim that 
\begin{equation}\label{approximation}
\| \U_k - \U_j \|_{L^\infty(\Omb)} \leq A (1+ \| f_k\|^\tau_{L^p(\Om)})(1+ \| f_j\|^\tau_{L^p(\Om)}) \| f_k -f_j \|^{\gamma/n}_{L^1(\Om)}
\end{equation}
where $ 0 \leq \gamma < 1/(q+1)$ fixed, $ \tau := \frac{1}{n}+ \frac{\gamma q}{n-\gamma n(1+q)}$ and $ A= A(\gamma, n,q, diam(\Om))$. 
\\
Indeed, by the stability theorem \ref{stability Lp} and for $ r=n$, we get that 
$$
 \sup_{\Om} (\U_k - \U_j) \leq C (1+ \| f_j\|^\tau_{L^p(\Om)}) \|( \U_k - \U_j)_+ \|_{L^n(\Om)}^\gamma \leq C (1+ \| f_j\|^\tau_{L^p(\Om)})  \| \U_k - \U_j \|_{L^n(\Om)}^\gamma
$$
where $ 0 \leq \gamma < 1/(q+1)$ fixed and $ C = C (\gamma,n,q) >0$.
\\
Hence by the $L^n-L^1$ stability theorem in \cite{Bl93} (see here Remark \ref{stab}), we get 
$$
\| \U_k - \U_j \|_{L^n(\Om)} \leq \tilde{C} \|f_k - f_j \|^{1/n}_{L^1(\Om)},
$$
where $ \tilde{C}$ depends on $ diam(\Om)$.
\\
Then, by combining the last two inequalities, we get
$$
\sup_{\Om} (\U_k - \U_j) \leq C \tilde{C}^\gamma (1+ \| f_j\|^\tau_{L^p(\Om)}) \|f_k - f_j \|^{\gamma/n}_{L^1(\Om)}
$$
Reversing the roles of $ \U_j$ and $\U_k$  we see that
$$
\sup_{\Om} (\U_j - \U_k) \leq C \tilde{C}^\gamma (1+ \| f_k\|^\tau_{L^p(\Om)}) \|f_k - f_j \|^{\gamma/n}_{L^1(\Om)}
$$
Hence we conclude that
$$
\| \U_k - \U_j \|_{L^\infty(\Om)} \leq C \tilde{C}^\gamma  (1+ \| f_k\|^\tau_{L^p(\Om)})(1+ \| f_j\|^\tau_{L^p(\Om)}) \| f_k -f_j \|^{\gamma/n}_{L^1(\Om)}
$$
Since $ \U_k=\U_j=\fii$ on $\Omf$, we get the inequality (\ref{approximation}). 
\\
Since $ f_j$ conveges to $f$ in $L^p(\Om)$, there is a uniform constant $ B>0$ such that 
$$
\| \U_k - \U_j \|_{L^\infty(\Omb)} \leq B
$$
This implies that the sequence $ \U_j$ converges uniformly in $\Omb$. Let us put $ \U = \lim \U_j$, it is clear that $ \U \in PSH(\Om) \cap \Cc(\Omb)$, $ \U = \fii$ on $\Omf$. Moreover, $ (dd^c \U_j)^n $ converges to $ (dd^c \U)^n $ in the sense of currents, then $(dd^c \U)^n =f \beta^n$ in $\Omega$. The uniqueness of the solution comes from the comparison principle (see \cite{BT76}).
\end{proof}

Our next step is to construct  H\"older continuous sub-barrier and super-barrier to the Dirichlet problem when $ f \in L^p(\Om)$ for some $ p>1$ and $\fii \in \Cc^{0,1}(\Omf)$. 

\begin{proposition}\label{barrier2}
Let $\fii \in \Cc^{0,1}(\Omf)$ and $ 0 \leq f \in L^p(\Om)$, for some $ p  >1$. Then there exist  $v,w \in PSH(\Om)\cap \Cc^{0,\al}(\Omb)$ where $ \alpha < 1/(nq+1)$ such that $ v=\fii=-w $ on $\Omf$ and $v \leq \U \leq -w$ on $\Om$.
\end{proposition}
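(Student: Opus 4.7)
The plan is to construct a Hölder sub-barrier $v$ by gluing a globally Hölder solution on an enclosing ball to an auxiliary Hölder plurisubharmonic function on $\Omega$, and to produce the super-barrier via a plurisubharmonic extension of $-\fii$ together with the maximum principle.

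\textbf{Sub-barrier.} Fix a ball $B$ with $\Omb \Subset B$ and extend $f$ by zero to $\tilde f \in L^p(B)$; note that $\tilde f$ vanishes near $\partial B$. Since $B$ is smooth and strongly pseudoconvex, the Hölder continuity result of \cite{GKZ08} recalled in the introduction provides the unique solution $\psi$ to $Dir(B,0,\tilde f)$ with $\psi \in PSH(B) \cap \Cc^{0,\alpha_0}(\bar B)$ for every $\alpha_0 < 2/(nq+1)$. Hence $\varphi_0 := \fii - \psi|_{\Omf}$ belongs to $\Cc^{0,\alpha_0}(\Omf)$, using $\fii \in \Cc^{0,1}\subset \Cc^{0,\alpha_0}$. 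Proposition~\ref{subs} applied to the data $(\Om,\varphi_0,0)$ then furnishes $h \in PSH(\Om) \cap \Cc(\Omb)$ with $h|_{\Omf}=\varphi_0$ and $\omega_h(t) \leq C\max\{\omega_{\varphi_0}(t^{1/2}),t^{1/2}\}$, whence $h \in \Cc^{0,\alpha_0/2}(\Omb)$.

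Now set $v := h + \psi|_{\Omb}$. Then $v \in PSH(\Om) \cap \Cc^{0,\alpha_0/2}(\Omb)$ with $v|_{\Omf} = \fii$, and the Bedford--Taylor multinomial expansion gives
$$ (dd^c v)^n = \sum_{k=0}^{n} \binom{n}{k}(dd^c h)^k \wedge (dd^c \psi)^{n-k} \geq (dd^c \psi)^n = f\,\beta^n \quad \text{on } \Om. $$
Thus $v$ is a bounded psh subsolution agreeing with $\U$ on $\Omf$, and the comparison principle (valid for $L^p$ densities, \cite{Ko98}) yields $v \leq \U$ on $\Omb$. Taking $\alpha_0$ arbitrarily close to $2/(nq+1)$ produces a sub-barrier in $\Cc^{0,\alpha}(\Omb)$ for any $\alpha < 1/(nq+1)$.

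\textbf{Super-barrier.} Applying Proposition~\ref{subs} to $(\Om,-\fii,0)$ yields $w \in PSH(\Om)\cap \Cc(\Omb)$ with $w|_{\Omf}=-\fii$ and $\omega_w(t) \leq C\max\{\omega_{\fii}(t^{1/2}),t^{1/2}\}$, so $w \in \Cc^{0,1/2}(\Omb)$ since $\fii \in \Cc^{0,1}$. The sum $\U + w$ is psh in $\Om$ and vanishes on $\Omf$, so the maximum principle gives $\U \leq -w$ on $\Omb$. Because $1/2 > 1/(nq+1)$, this $w$ has the required regularity.

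\textbf{Main obstacle.} The crux is bootstrapping from an existing Hölder regularity statement on the enclosing ball $B$ with $L^p$ data. This is non-circular because \cite{GKZ08} already establishes Hölder regularity on smooth strongly pseudoconvex domains when the density is bounded near the boundary, a hypothesis trivially satisfied here since $\tilde f \equiv 0$ near $\partial B$. Once $\psi$ is in hand, the gluing $v = h + \psi$ is routine thanks to Proposition~\ref{subs} and the nonnegativity of mixed Monge--Ampère products of plurisubharmonic functions.
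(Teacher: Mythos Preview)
Your proof is correct and follows essentially the same route as the paper: extend $f$ by zero to an enclosing ball, invoke \cite{GKZ08} for the H\"older solution $\psi$ there, add a H\"older psh function on $\Omega$ with boundary values $\fii-\psi$ to obtain the sub-barrier, and handle the super-barrier via a psh function with boundary values $-\fii$. The only cosmetic difference is that the paper uses Theorem~A to produce the actual solutions $h_2=\U(\Omega,\fii-h_1,0)$ and $w=\U(\Omega,-\fii,0)$, whereas you invoke Proposition~\ref{subs} directly to produce subsolutions with the same boundary values; since both yield plurisubharmonic functions in $\Cc^{0,\alpha_0/2}(\Omb)$ with the required boundary data, the rest of the argument is identical.
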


\begin{proof}
Fix a large ball $ B \subset \C^n$ so that $ \Om \Subset B \subset \C^n$. Let $ \tilde{f} $ be a trivial extension of $f$ to $B$. Since $ \tilde{f}\in L^p(\Om)$ is bounded near $ \partial B$, the solution $h_1$ to $ Dir(B,0,\tilde{f})$ is H\"older continuous  on $ \bar{B}$ with exponent $ \alpha_1 < 2/(nq+1)$ (see \cite{GKZ08}). Now let $h_2$ denote the solution to the Dirichlet problem in $\Om$ with boundary values $\fii-h_1$ and the zero density.
Thanks to Theorem A, we see that $ h_2 \in \Cc^{0,\alpha_2}(\Omb)$ where $ \alpha_2= \alpha_1/2$.
Therefore, the required barrier will be $ v =  h_1 + h_2$. It is clear that $ v \in PSH( \Om) \cap \Cc(\Omb)$, $ v|_{\Omf} = \fii$ and $ (dd^c v)^n \geq f \beta^n$ in the weak sense in $\Om$. Hence, by the comparison principle we get that $ v \leq \U  $ in $\Om$ and $ v=\U=\fii$ on $\Omf$. Moreover we have $ v \in \Cc^{0,\al}(\Omb)$ for any $ \alpha < 1/(nq+1)$.
\\
Finally, it is enough to set $ w= \U(\Om,-\fii,0)$ to obtain a super-barrier to the Dirichlet problem $ Dir(\Om,\fii,f)$. We note that $ w \in PSH(\Om)\cap \Cc(\Omb)$, $ -w= \fii$ on $\Omf$ and $ \U \leq -w$ on $\Omb$. Furthermore, by Theorem A, $w \in \Cc^{0,1/2}(\Omb)$ and then $ w \in  \Cc^{0,\alpha}(\Omb)$  for any $ \alpha < 1/(nq+1)$.
\end{proof}

When $ f \in L^p(\Om)$ for $ p \geq 2$, we are able to find a H\"older continuous barrier to the Dirichlet problem with more better H\"older exponent. The following theorem was proved in \cite{Ch14}  for the complex Hessian equation and it is enough here to put $m=n$ for the complex Monge-Amp\`ere equation.

\begin{theorem}(\cite{Ch14}).\label{barrier1}
Let $\fii \in \Cc^{0,1}(\Omf)$ and $ 0 \leq f \in L^p(\Om)$, $ p  \geq 2$. Then there exist $ v,w \in PSH(\Om) \cap \Cc^{0,1/2}(\Omb) $ such that $ v = \fii=-w $ on  $ \Omf$ and  $ v \leq \U \leq -w $ in $\Om$.
\end{theorem}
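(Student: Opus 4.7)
The super-barrier is straightforward. Take $w := \U(\Om, -\fii, 0)$, the continuous psh solution of the homogeneous Dirichlet problem with Lipschitz boundary datum $-\fii$, whose existence is given by Proposition \ref{solution}. Since $\omega_\fii(s) \leq L s$, Theorem A applied with $f \equiv 0$ yields
$$\omega_w(t) \leq \eta\max\{\omega_\fii(t^{1/2}), t^{1/2}\} \leq C t^{1/2},$$
so $w \in PSH(\Om)\cap \Cc^{0,1/2}(\Omb)$ and $-w = \fii$ on $\Omf$. The function $\U + w$ is psh on $\Om$, continuous on $\Omb$, and vanishes on $\Omf$, so the maximum principle gives $\U \leq -w$.

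For the sub-barrier $v$, I would refine the two-step construction of Proposition \ref{barrier2}. Fix a ball $B$ with $\Om \Subset B$, extend $f$ by zero to $\tilde f \in L^p(B)$, and let $h_1 \in PSH(B)\cap \Cc(\bar B)$ be the Ko\l odziej solution of $(dd^c h_1)^n = \tilde f \beta^n$ in $B$ with $h_1 = 0$ on $\partial B$. The crucial analytic input, and the step where the hypothesis $p \geq 2$ is needed, is the global Lipschitz regularity $h_1 \in \Cc^{0,1}(\bar B)$. Granting this, set $h_2 := \U(\Om, \fii - h_1|_\Omf, 0)$; since $\fii - h_1|_\Omf$ is Lipschitz, Theorem A applied with vanishing density gives $h_2 \in \Cc^{0,1/2}(\Omb)$. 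Then $v := h_1 + h_2$ lies in $PSH(\Om)\cap \Cc^{0,1/2}(\Omb)$, equals $\fii$ on $\Omf$, and satisfies $(dd^c v)^n \geq (dd^c h_1)^n = f\beta^n$ in $\Om$, and $v \leq \U$ follows from the comparison principle of Proposition \ref{solution}.

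The main obstacle is precisely the Lipschitz regularity of $h_1$ on $\bar B$: the H\"older estimate of \cite{GKZ08} only reaches exponent strictly less than $2/(nq+1)$, which never attains $1$, so a sharper argument is required. My plan would combine three ingredients: (i) the smoothness of $\partial B$ together with the vanishing of $\tilde f$ in a neighbourhood of $\partial B$, which provides explicit Lipschitz boundary barriers at every point of $\partial B$ and forces $h_1$ to be maximal psh in the annular region $B \setminus \mathrm{supp}\,\tilde f$; (ii) Ko\l odziej's $L^\infty$-bound $\|h_1\|_\infty \leq C\|\tilde f\|_{L^p}^{1/n}$ for global size control; and (iii) an interior gradient estimate obtained via integration by parts, where the condition $p \geq 2$ is exactly what closes the Cauchy--Schwarz inequality controlling $\int_B |\nabla h_1|^2$ in terms of $\|\tilde f\|_{L^p}$. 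Should Lipschitz regularity of $h_1$ prove unreachable in dimension $n \geq 2$, the backup is to construct $v$ in a single step as a Perron-Bremermann envelope tailored to yield $\Cc^{0,1/2}$ regularity directly: re-run the proof of Proposition \ref{subs}, replacing the blowing-up constant $K_1 = \|f\|_\infty^{1/n}$ by a capacity-based quantity depending only on $\|f\|_{L^p(\Om)}$, a replacement which is available precisely when $p \geq 2$.
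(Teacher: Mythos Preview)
The paper does not supply its own proof of this statement; it simply quotes the result from \cite{Ch14}, so there is no argument in the paper to compare with. Your super-barrier $w=\U(\Om,-\fii,0)$ is correct and is exactly the construction used in Proposition~\ref{barrier2}.

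The sub-barrier argument, however, has a genuine gap: the Lipschitz regularity of $h_1$ on the ball is false in general. Already for $n=1$ the equation $(dd^c h_1)^1=\tilde f\,\beta$ is the Poisson equation $\Delta h_1=4\tilde f$ on the disc, and for $p=2$ the solution lies only in $W^{2,2}$, which does not embed into $\Cc^{0,1}$ in real dimension $2$; decomposing an arbitrary $L^2$ function into positive and negative parts shows that this failure persists for some nonnegative $\tilde f\in L^2$. Your proposed route~(iii) cannot repair this: integrating by parts against a cut-off and applying Cauchy--Schwarz to $\int_B dh_1\wedge d^c h_1\wedge\beta^{n-1}$ produces at best a bound on $\|h_1\|_{W^{1,2}}$, which is far weaker than Lipschitz in real dimension $2n\ge 2$. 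For $n\ge 2$ the situation is no better, since the sharpest exponent from \cite{GKZ08} is strictly below $2/(nq+1)\le 2/(n+1)<1$. Since your scheme needs the full Lipschitz bound on $h_1$ (so that $\fii-h_1|_{\Omf}$ stays Lipschitz and Theorem~A then yields $h_2\in\Cc^{0,1/2}$), the two-step construction collapses at the endpoint $p=2$.

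The backup plan is not a proof either. In Proposition~\ref{subs} the constant $K_1$ enters through the \emph{pointwise} requirement $K_1\ge f^{1/n}(z)$, needed so that $\D(K_1|z-z_0|^2)\ge f^{1/n}$ for every admissible $H$. You cannot simply replace $K_1$ by an integral or capacity quantity unless you simultaneously replace the auxiliary function $|z-z_0|^2$ by a psh function whose Monge--Amp\`ere measure dominates $f\beta^n$ while remaining $\Cc^{0,1/2}$ on $\Omb$. Identifying such a function is precisely the missing ingredient, and your proposal does not supply it.
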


Now we recall the comparison principle for the total mass of laplacian of plurisubharmonic functions.

\begin{lemma}\label{mass}
Let $ u,v \in PSH(\Om) \cap \Cc(\Omb)$ such that $ v \leq u$ on $\Om$ and $u=v$ on $\Omf$. Then 
$$ \int_{\Om} dd^c u \wedge \beta^{n-1} \leq \int_{\Om} dd^c v \wedge \beta^{n-1}.$$
\end{lemma}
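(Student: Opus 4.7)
My plan is to reduce the claim to a Green-type integration-by-parts identity using the Lipschitz defining function $\rho$ of $\Om$ as both a cutoff and a test function, and then extract the mass inequality by letting the cutoff parameter shrink.

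Without loss of generality assume the right-hand side is finite, otherwise the inequality is trivial. For each $K > 0$ I would introduce the truncated defining function $\rho_K := \max(\rho, -K)$, which is continuous plurisubharmonic on $\Omb$, takes values in $[-K,0]$, and vanishes on $\Omf$. The heart of the proof is the Green identity
\begin{equation*}
\int_\Om \rho_K \, dd^c(u - v) \wedge \beta^{n-1} = \int_\Om (u - v) \, dd^c \rho_K \wedge \beta^{n-1}.
\end{equation*}
For smooth psh data on a smooth domain this is Stokes' theorem applied to the $(2n-1)$-form $\rho_K \, d^c(u-v) \wedge \beta^{n-1} - (u-v) \, d^c \rho_K \wedge \beta^{n-1}$: the form $\beta^{n-1}$ is closed, the symmetric cross-terms vanish after wedging with $\beta^{n-1}$ for bidegree reasons, and the boundary contribution drops because $\rho_K = u - v = 0$ on $\Omf$. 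For our merely continuous psh $u, v$ and Lipschitz $\rho$ on an SHL (hence Lipschitz) boundary, I would obtain the identity by a standard double-parameter approximation: smooth $u, v$ by mollifications on an exhausting family $\Om_\delta := \{\rho < -\delta\} \Subset \Om$, apply Stokes on $\Om_\delta$, then send the mollification parameter to zero followed by $\delta \to 0$. On $\partial \Om_\delta$ one has $\rho_K \equiv -\delta$ (for $K > \delta$), so the first Stokes boundary term converges to $-\delta \int_{\Om_\delta} dd^c(u-v) \wedge \beta^{n-1}$, which vanishes with $\delta$, while the second is controlled by $\sup_{\partial \Om_\delta}|u - v| \to 0$ against the bounded total $d^c\rho$-mass on $\partial \Om_\delta$.

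Granted the identity, its right-hand side is non-negative: $u - v \geq 0$ on $\Om$ and $dd^c \rho_K \wedge \beta^{n-1}$ is a positive Radon measure. Rearranging and dividing by $K$ yields
\begin{equation*}
\int_\Om \psi_K \, dd^c u \wedge \beta^{n-1} \leq \int_\Om \psi_K \, dd^c v \wedge \beta^{n-1},
\end{equation*}
where $\psi_K := -\rho_K/K$ takes values in $[0,1]$ and converges pointwise to $1$ on $\Om$ as $K \to 0^+$ (for each fixed $z$ with $\rho(z) < 0$ one has $\psi_K(z) = 1$ as soon as $K < -\rho(z)$). Dominated convergence with bounded dominant $1$ against the finite measure $dd^c v \wedge \beta^{n-1}$ sends the right-hand side to $\int_\Om dd^c v \wedge \beta^{n-1}$, while Fatou's lemma applied to the left gives $\int_\Om dd^c u \wedge \beta^{n-1} \leq \liminf_{K \to 0^+} \int_\Om \psi_K \, dd^c u \wedge \beta^{n-1}$; combining these yields the claim.

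The hard part will be the rigorous justification of the Green identity given the weak regularity of the data and of $\Omf$. The feature that makes the approximation feasible is precisely the joint vanishing of the cutoff $\rho_K$ and of the function $u - v$ on $\Omf$, which forces the Stokes boundary contributions on $\partial \Om_\delta$ to vanish in the limit through uniform estimates on the modulus of continuity of $u - v$ near $\Omf$.
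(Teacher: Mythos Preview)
Your overall strategy---integrate by parts against a plurisubharmonic cutoff that vanishes on $\partial\Omega$, then pass to a limit---is the same as the paper's. The organisational difference is in how the boundary contribution in the Green identity is neutralised.

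The paper does \emph{not} work directly with $u$ and $v$. It first replaces $u$ by $u_\epsilon := \max(u-\epsilon,v)$, which coincides with $v$ in a full \emph{neighbourhood} of $\partial\Omega$ (not merely on $\partial\Omega$). Then $u_\epsilon - v$ has compact support inside $\Omega$, the integration by parts against the fixed test function $h=\max(-1,\rho/m)$ is immediate with no boundary term whatsoever, and one concludes by letting $\epsilon\searrow 0$ via Bedford--Taylor convergence of $dd^c u_\epsilon\wedge\beta^{n-1}$ to $dd^c u\wedge\beta^{n-1}$. Your route instead keeps $u,v$ fixed and pushes the family of cutoffs $\psi_K$ to the boundary.

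The genuine gap in your sketch is the justification of the Green identity. You assert that the first Stokes boundary term on $\partial\Omega_\delta$ equals $-\delta\int_{\Omega_\delta} dd^c(u-v)\wedge\beta^{n-1}$ and that this ``vanishes with $\delta$''. For that you would need $\int_{\Omega_\delta} dd^c u\wedge\beta^{n-1}$ to remain bounded (or at worst grow like $o(1/\delta)$) as $\delta\to 0$; but boundedness of the Laplacian mass of $u$ on $\Omega$ is exactly what the lemma is being used to produce, so the argument as written is circular. The paper's $u_\epsilon$ device sidesteps this entirely: once the difference is compactly supported there is nothing to control at the boundary. If you want to keep your organisation, you should either invoke the integration-by-parts formula for bounded psh functions vanishing on $\partial\Omega$ as a black box (Cegrell's $\mathcal E_0$ theory, or Bedford--Taylor), or insert the $\max(u-\epsilon,v)$ reduction before running your cutoff argument.
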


\begin{proof}
First assume that there exists an open set $ V \Subset \Om$ such that $ u=v$ on $\Omb \setminus V$. Let $ h \in PSH (\Om) \cap \Cc(\Omb)$ such that  $ h=0$ on $\Omf$. Then integration by parts yields 
$$ \int_{\Om} h dd^c(v- u) \wedge \beta^{n-1} = \int_{\Om} (v-u) dd^c h \wedge \beta^{n-1}.$$
Let $ V_1$ be an open set such that $ V \Subset V_1 \Subset \Om$ and define the function  $ h= \max (-1, \rho /m)$ where $ \rho $ be the defining function of $ \Om$ and $ m= |\sup_{\partial V_1} \rho |$. It is clear that $ h \in PSH(\Om) \cap \Cc(\Omb)$, $h=0$ on $ \Omf$ and $ h=-1 $ on $\bar{V_1}$.
Since $u=v$ on $\Omb \setminus V$, we get
$$  \int_{\Om}  dd^c(v- u) \wedge \beta^{n-1} =  \int_{V_1} dd^c(v- u) \wedge \beta^{n-1}. $$
\\
We note that 

\begin{tabular}{ll}

$\int_{V_1}  dd^c(v- u) \wedge \beta^{n-1}$& $=-\int_{V_1} h dd^c(v- u) \wedge \beta^{n-1} $\\
&$=- \int_{\Om} h dd^c(v- u) \wedge \beta^{n-1}$ \\
& $ =- \int_{\Om} (v- u)  dd^c h \wedge \beta^{n-1} \geq 0 . $\\

\end{tabular}
\\
Hence we obtain 
$$ 
\int_{\Om} dd^c u \wedge \beta^{n-1} \leq \int_{\Om} dd^c v \wedge \beta^{n-1}.
$$
Now if we have only $ u=v$ on $\Omf$, then we define for small $ \epsilon>0$, the function $ u_\epsilon:= \max(u-\epsilon, v)$. Then we see that $ v \leq u_\epsilon$ on $\Om$ and $ u_\epsilon =v $ near the boundary of $\Om$.
\\
Therefore, we have 
$$
 \int_{\Om} dd^c u_\epsilon \wedge \beta^{n-1} \leq \int_{\Om} dd^c v  \wedge \beta^{n-1}.
$$
We know by the convergence's theorem of Bedford and Taylor that $ dd^c u_\epsilon \beta^{n-1} \rightharpoonup dd^c u \wedge \beta^{n-1}$ when $ \epsilon \searrow 0$. Thus we have
$$
 \int_{\Om} dd^c u \wedge \beta^{n-1} \leq \int_{\Om} dd^c v  \wedge \beta^{n-1}.
$$
which proves the required inequality.
\end{proof}

\subsection{Proof Theorem B}
Let $\U_0$ the solution to the Dirichlet problem $ Dir(\Om,0,f)$.
We first claim that the total mass of $ \Delta \U_0$ is finite in $\Om$. Indeed, let $ \rho$ be the  defining function of $\Om$, then by Corollary 5.6 in \cite{Ce04} we get that
\begin{equation}\label{cegrell inequlaity}
\begin{array}{ll}
\int_{\Om} dd^c \U_0 \wedge (dd^c \rho)^{n-1} & \leq \left(\int_{\Om} (dd^c \U_0)^n \right)^{1/n} \cdot  \left( \int_{\Om} ( dd^c \rho )^n \right)^{(n-1)/n} \\
& \leq \left(\int_{\Om} f \beta^n \right)^{1/n} \cdot  \left( \int_{\Om} ( dd^c \rho )^n \right)^{(n-1)/n}. \\ 
\end{array} 
\end{equation}  
Since $\Om$ is a bounded SHL domain, there exists a constant $c>0$ such that $ dd^c \rho \geq c \beta$ in $\Omega$. Hence the inequality \ref{cegrell inequlaity}   yields 
$$ 
\begin{array}{ll}
\int_{\Om} dd^c \U_0 \wedge \beta^{n-1}  &  \leq\frac{1}{c^{n-1}}\int_{\Om} dd^c \U_0 \wedge               
 (dd^c \rho)^{n-1}  \\
& \leq \frac{1}{c^{n-1}}\left(\int_{\Om} f \beta^n \right)^{1/n} \cdot  \left( \int_{\Om} ( dd^c \rho )^n \right)^{(n-1)/n} \\ 
\end{array}
$$
\noindent
Now we note that the total mass of complex Monge-Ampere measure of $\rho$  is finite in $\Om$ by Chern-Levine-Nirenberg inequality since $\rho$ is psh and bounded in a neighborhood of $\Omb$ ( see \cite{BT76}).
Therefore, the total mass of $\Delta \U_0$ is finite in $\Om$.
\\
Let $\tilde{\fii}$ be a $\Cc^{1,1}$-extension of $\fii$ to $\Omb$ such that $ \| \tilde{\fii}\|_{\Cc{1,1}(\Omb)} \leq C \| \fii \|_{\Cc^{1,1}(\Omf)}$ for some $C>0$.
Now, let $ v= A\rho + \tilde{\fii} + \U_0$ where $ A \gg 1$ such that $ A \rho +\tilde{\fii} \in PSH(\Om)$. By the comparison principle we see that $ v \leq \U$ in $\Om$ and $ v=\U =\fii$ on $\Omf$. Since $ \rho$ is psh in a neighborhood of $ \Omb$ and $ \| \Delta \U_0 \|_\Om < +\infty$, we get that $  \| \Delta v \|_\Om < +\infty$.
Then  by Lemma \ref{mass} we have $ \| \Delta\U \| < + \infty$.

\noindent
The Proposition \ref{barrier2} gives the existence of  H\"older continuous barriers to the Dirichlet problem. Then using Theorem \ref{Lp} we obtain the final result that is when $ f \in L^p(\Om)$ for some $p>1$, we get
$\U \in PSH(\Om)\cap \Cc^{0,\al}(\Omb)$ where $ \alpha < 1/(nq+1)$.
\\
Moreover, if $ f \in L^p(\Om)$ for some $p \geq 2$,  we can get  better result. By Theorem \ref{barrier1} and Theorem \ref{Lp}, we see that  
$\U \in PSH(\Om)\cap \Cc^{0,\al}(\Omb)$ where $ \alpha < \min \{ 1/2, 2/(nq+1) \}$.

\begin{Remark}
It is shown in \cite{GKZ08} that we cannot expect a better H\"older exponent than $ 2/nq$ (see also \cite{Pl05}).
\end{Remark}

\bigskip

\bigskip
\bigskip

\bigskip

\noindent 
Mohamad Charabati \\
Institut de Math\'ematiques de Toulouse \\
Universit\'e Paul Sabatier \\
118 route de Narbonne \\
31602 Toulouse Cedex 09 (France). \\
e-mail: {\tt mohamad.charabati@math.univ-toulouse.fr }

\end{document}